\documentclass[11pt]{amsart}

\usepackage{amscd,amsmath,amssymb,amsfonts,amsthm,latexsym}
\usepackage{mathtools}
\usepackage{mathrsfs}

\usepackage[dvipsnames]{xcolor}

\usepackage{enumerate}
\usepackage{enumitem}

\usepackage{aliascnt}

\usepackage[colorlinks=true,
linkcolor=blue,
citecolor=cyan,
urlcolor=NavyBlue]{hyperref}

\usepackage[left=2.5cm,right=2.5cm,
top=3cm,bottom=3cm,
headheight=2.5mm]{geometry}

\allowdisplaybreaks

\newtheorem{theorem}{Theorem}[section]

\newaliascnt{corollary}{theorem}

\aliascntresetthe{corollary}

\newaliascnt{lemma}{theorem}
\newtheorem{lemma}[lemma]{Lemma}
\aliascntresetthe{lemma}

\newaliascnt{proposition}{theorem}
\newtheorem{proposition}[proposition]{Proposition}
\aliascntresetthe{proposition}

\newaliascnt{definition}{theorem}
\newtheorem{definition}[definition]{Definition}
\aliascntresetthe{definition}

\newaliascnt{remark}{theorem}
\newtheorem{remark}[remark]{Remark}
\aliascntresetthe{remark}

\theoremstyle{definition}

\newaliascnt{example}{theorem}

\aliascntresetthe{example}

\DeclareMathOperator{\supp}{supp}

\begin{document}
	
	\title[The Super Alternative Daugavet Property, Unconditional Bases and SCD Geometry]{The Super Alternative Daugavet Property, Unconditional Bases and SCD Geometry}
	
	\author[Ribeiro]{Geivison Ribeiro}
	
	\address[Geivison Ribeiro]
	{
		Departamento de Matem\'{a}tica \newline\indent
		Universidade Federal do Maranh\~{a}o \newline\indent
		65085-580 - S\~{a}o Lu\'is, Brazil.
	}
	
	\email{
		\href{mailto:geivison@unicamp.br}
		{geivison@unicamp.br}
		\textrm{ and }
		\href{mailto:geivison.ribeiro@academico.ufpb.br} {geivison.ribeiro@academico.ufpb.br}}

	\thanks{
		ORCID:
		\href{https://orcid.org/0000-0002-0345-7597}
		{0000-0002-0345-7597}
	}
	
	\author[Rodr\'iguez--Vidanes]{Daniel L. Rodr\'iguez--Vidanes}
	
	\address[Daniel L. Rodr\'iguez--Vidanes]
	{
		Grupo de Análisis Matemático y Aplicaciones (AMA)\newline\indent
		Departamento de Matem\'atica Aplicada a la Ingeniería Industrial\newline\indent
		Escuela Técnica Superior de Ingeniería y Diseño Industrial\newline\indent
		Universidad Politécnica de Madrid\newline\indent
		Ronda de Valencia 3, Madrid, 28012, Spain
	}
	
	\email{
		\href{mailto:dl.rodriguez.vidanes@upm.es}
		{dl.rodriguez.vidanes@upm.es}
	}
	
	\thanks{
		ORCID:
		\href{https://orcid.org/0000-0002-1016-096X}
		{0000-0002-1016-096X}
	}
	
	\keywords{
		Daugavet property,
		super Alternative Daugavet property,
		unconditional basis,
		SCD sets,
		weak topology
	}
	
	\subjclass[2020]{
		46B20,
		46B03,
		46B04,
		46B22,
		46B26
	}
	
\begin{abstract}
	We answer negatively the $1$-unconditional part of \cite[Question~6.4]{Langemets2025}, by Langemets, L\~oo, Martín, Perreau and Rueda Zoca, and, more generally, prove that no infinite-dimensional Banach space with a $K$-unconditional basis, for $1\leq K<3/2$, can satisfy the super Alternative Daugavet property.
	We also address two recent questions posed by L\~oo and Perreau concerning weak topological structures and slicely countably determined phenomena in Banach spaces with unconditional bases. 
	More precisely, we prove that the weak unit ball of every Banach space with a $1$-unconditional basis admits a countable $\pi$-base, thereby giving a positive answer to \cite[Question~5.1]{LooPerreau2026}.
	We further show that every bounded convex subset of a Banach space with a Schauder basis that is shrinking or boundedly complete admits a countable $\pi$-base for its relative weak topology. 
	We complement this result with two permanence principles: one for shrinking Schauder decompositions whose finite partial sums have countable weak $\pi$-bases, and another for unconditional sums over boundedly complete Banach sequence spaces.
	Finally, we prove that, for every $k>1$, there exists a Banach space with a $k$-unconditional basis whose unit ball is not slicely countably determined, thereby giving a positive answer to \cite[Question~5.4]{LooPerreau2026}.
\end{abstract}

\maketitle

\section{Introduction}

The Daugavet property (DP, for short) and its alternative versions play a central role in the geometry of Banach spaces. 
Since the work of Kadets, Shvidkoy, Sirotkin and Werner~\cite{Kadets2000}, rank--one perturbations of the identity have been connected with a rich family of extremal geometric phenomena, slice properties and renorming techniques. 
We refer to \cite{DGZ1993,KadetsBook2025,MartinOikhberg2004,Shvydkoy2000,Becerra2014} for background and to \cite{AbrahamsenAliagaLimaMartinyPerreauProchazkaVeeorg2024,HallerLangemetsPerreauVeeorg2024,Kadets2021,Veeorg2023} for related pointwise and renorming aspects of Daugavet-type geometry.

Recently, Langemets, L\~oo, Mart\'in, Perreau and Rueda Zoca introduced in~\cite{Langemets2025} the \emph{super Alternative Daugavet property} (super ADP, for short).
Recall that a Banach space $X$ has the super ADP if, for every $x\in S_X$ and every relatively weakly open set $W\subset B_X$
with $W\cap S_X\neq\varnothing$, one has
	\[
	\sup_{y\in W} \max_{\theta\in\mathbb T} \|x+\theta y\| = 2.
	\]
This property strengthens the Alternative Daugavet property (ADP, for short) and satisfies
	\[
	\text{DP} \Longrightarrow \text{super ADP} \Longrightarrow \text{ADP},
	\]
with both implications strict; see~\cite{Langemets2025}.
It also imposes strong geometric restrictions, excluding several local regularity properties compatible with the ADP.

A classical obstruction for the DP is its incompatibility with unconditional structures.
In particular, spaces with the DP do not embed into Banach spaces with unconditional bases~\cite{Kadets2000}.
For the super ADP, the corresponding problem was left open in~\cite[Question~6.4]{Langemets2025}: does there exist an infinite-dimensional Banach space with an unconditional basis, or even with a $1$-unconditional basis, and the super ADP?

This question is also connected with recent developments concerning slicely countably determined sets (SCD sets, for short) and weak topological structures in Banach spaces with unconditional bases.
SCD sets were introduced in~\cite{AvilesKadetsMartinMeriShepelska2010} and further studied, among other places, in~\cite{KadetsBook2025,KadetsPerezWerner2018}.
They form a broad class of separable sets including, for instance, separable dentable sets and separable sets not containing sequences equivalent to the canonical basis of $\ell_1$.
The connection with Daugavet-type geometry is natural: the Daugavet property is one of the main known obstructions to slicely countable determination, while SCD assumptions often allow one to strengthen or identify alternative Daugavet-type properties.


Two classical questions in this direction asked whether every separable non-SCD Banach space must contain an isomorphic copy of a Banach space with the DP, and whether every Banach space with an unconditional basis must be SCD; see~\cite{AvilesKadetsMartinMeriShepelska2010,KadetsPerezWerner2018}. 
Since spaces with the DP do not embed into spaces with unconditional bases, a negative answer to the second question would also provide a negative answer to the first one. 
L\~oo and Perreau recently obtained such a negative answer in~\cite{LooPerreau2026} by means of the binary tree space $X_T$. This space is generated by the adequate family of all chains in the infinite binary tree, in the sense of Talagrand~\cite{Talagrand1979,Talagrand1984}, and has a canonical $1$-unconditional basis. 
They proved that its positive unit ball $B_{X_T}^{+}$ is not SCD~\cite[Theorem~3.6]{LooPerreau2026}, whereas the relative weak topology of the whole unit ball $B_{X_T}$ admits a countable $\pi$-base~\cite[Theorem~1.6]{LooPerreau2026}. 
In particular, $B_{X_T}$ is SCD, since every bounded convex set admitting a countable $\pi$-base for its relative weak topology is SCD~\cite[Proposition~2.21]{AvilesKadetsMartinMeriShepelska2010}, while $B_{X_T}^{+}$ cannot admit such a $\pi$-base. 
Thus, even in the presence of a $1$-unconditional basis, the weak-topological and SCD behaviour of the whole unit ball may differ sharply from that of its natural bounded closed convex subsets. 
This should also be compared with the result of Kadets, Martín, Merí and Werner~\cite{KadetsMartinMeriWerner2013}, according to which $B_X$ is SCD whenever $X$ has a $1$-unconditional basis.

Countable $\pi$-bases for weak topologies provide another important tool in this setting.
If a bounded convex set admits a countable $\pi$-base for its relative weak topology, then it is SCD.
Conversely, for separable convex bounded sets, the failure of a countable $\pi$-base forces the presence of a sequence equivalent to the canonical basis of $\ell_1$; see~\cite{AvilesKadetsMartinMeriShepelska2010}.
L\~oo and Perreau emphasize that there are few general mechanisms for proving that a weak topology has no countable $\pi$-base.
One such mechanism is precisely the super ADP: they prove in~\cite{LooPerreau2026} that if $X$ has the super ADP, then $(B_X,w)$ has no countable $\pi$-base.
In particular, every Banach space with the super ADP contains an isomorphic copy of $\ell_1$, answering a question left open in~\cite{Langemets2025}.

This observation gives a direct link between the super ADP and the question whether weak unit balls of spaces with $1$-unconditional bases must admit countable $\pi$-bases.
Indeed, if there existed an infinite-dimensional Banach space with a $1$-unconditional basis and the super ADP, then the result of L\~oo and Perreau would immediately provide a negative answer to~\cite[Question~5.1]{LooPerreau2026}.
Equivalently, it would give a natural solution to the approach suggested in~\cite[Question~5.2]{LooPerreau2026}, where the possible coexistence of the super ADP with SCD unit balls, and in particular with $1$-unconditional bases, is singled out as a central open problem.
L\~oo and Perreau prove that this coexistence fails for Banach spaces generated by adequate families, including the tree spaces considered in their paper.
Our main result strengthens this conclusion by showing that no infinite-dimensional Banach space with a $K$-unconditional basis, for $1\leq K<3/2$, can have the super ADP.

\begin{theorem}\label{thm:intro-main}
	Let $K\in [1,3/2)$.
	Then, there is no infinite-dimensional Banach space with a $K$-unconditional basis having the super ADP.
\end{theorem}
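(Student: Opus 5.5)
The plan is to find one point $x\in S_X$ and one relatively weakly open set $W\subset B_X$ with $W\cap S_X\neq\varnothing$ such that
\[
\sup_{y\in W}\max_{\theta\in\mathbb T}\|x+\theta y\|\le \tfrac12+K+\varepsilon<2 .
\]
The numerology $\tfrac12+K<2\iff K<\tfrac32$ is the reason for aiming at this particular bound. We write $(e_n)$ for the $K$-unconditional basis, $(e_n^*)$ for the biorthogonal functionals and $P_n$ for the basis projections. Two standard facts will be used.
\begin{enumerate}
\item[(a)] Sets of the form $W=\{y\in B_X:\ \|P_n y-u\|<\delta\}$, with $u\in\operatorname{span}\{e_1,\dots,e_n\}$, are relatively weakly open, since $P_n$ has finite rank. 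Such a $W$ meets $S_X$ as soon as $\|u\|<1$: add a suitable multiple of a far-out basis vector.
\item[(b)] For $y\in W$, write $y=P_ny+t$ with $t=(I-P_n)y$. Since $P_ny-t$ is a sign change of $y$, we have $\|P_ny-t\|\le K\|y\|\le K$. Moreover $I-P_n=\tfrac12(I-S)$ with $S$ a sign change, so $\|t\|\le\tfrac{1+K}{2}$.
\end{enumerate}

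The core estimate comes from decomposing $x+\theta y$, up to an error $O(\delta)$, as
\[
\alpha\,(u+t)+\beta\,(u-t).
\]
Here $u+t\approx y$ has norm at most $1$ and $u-t$ has norm at most $K$ by (b). The coefficients $\alpha,\beta$ are determined by matching the coefficient of $u$ (which comes from $x$ and $\theta u$) and the coefficient of $t$ (which is $\theta$). This gives
\[
\|x+\theta y\|\le |\alpha|+K|\beta|+O(\delta).
\]
We then tune the choice of $x$ within $\operatorname{span}\{e_1,\dots,e_n\}$, and the size of $u$, so that $|\alpha|+K|\beta|\le \tfrac12+K$ uniformly in $\theta\in\mathbb T$.

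In practice we would try $u=\lambda e_1/\|e_1\|$ with $\lambda<1$ and $x$ a multiple of $e_1$, possibly also using a second head coordinate. We would also interpolate between the triangle-type bound $\|x+\theta y\|\le\|x+\theta u\|+\|t\|$ and the decomposition bound, taking whichever is smaller on the relevant arc of $\theta$. The decomposition bound is good near the "bad'' values of $\theta$, where $x$ and $\theta u$ add up, while the triangle bound is good where they cancel.

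The main obstacle is precisely this optimisation. The naive choices $x=\pm u/\|u\|$ only give bounds of order $2$ or $\tfrac32+\tfrac K2$, which are useless except for $K<1$. So the head of $x$ must be chosen so that its interaction with the head $u$ of $y$ is controlled for every unimodular $\theta$ at once. If the one-coordinate choice fails, we would let $x$ have two head coordinates: one shared with $u$, and one orthogonal in the unconditional sense. We would then use sign changes on that second coordinate to split $x+\theta y$ into pieces each costing at most $\tfrac12$ or $K$.

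Once a uniform bound $c<2$ is obtained on $W$, the super ADP fails at this $x$. Infinite-dimensionality is used only to guarantee $W\cap S_X\neq\varnothing$, via the tail vectors in (a).
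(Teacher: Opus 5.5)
Your proposal has a genuine gap: the core estimate $|\alpha|+K|\beta|\le\frac12+K$, uniformly in $\theta$, is never proved, and in the form you set it up it cannot hold.

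The decomposition $x+\theta y\approx\alpha(u+t)+\beta(u-t)$ only makes sense when $x\in\operatorname{span}\{u\}$, say $x=\mu u$ with $\mu=1/\|u\|\ge1$. Then $\alpha=\theta+\mu/2$ and $\beta=\mu/2$, so at $\theta=1$ you get $1+\frac{(1+K)\mu}{2}\ge2$. This is not a weakness of the estimate. In $\ell_1$, take $x=e_1$ and any $y\in S_{\ell_1}$, and choose $\theta$ with $\theta y_1=|y_1|$; then $\|e_1+\theta y\|=2$, so no $W$ works for a one-directional head.

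The fix you gesture at, a second head coordinate with opposite sign, is the paper's construction. One takes $x\propto\sum_{j=1}^n u_j$ and $W=\{y\in B_X:\|P_Ay-\widetilde y_0\|<\delta\}$, where $\widetilde y_0\propto\sum_j\varepsilon_ju_j$ and $\sum_j\varepsilon_j=0$. Then $x+\theta y$ splits into two pieces:
\begin{itemize}
\item a part coordinatewise dominated by $\widetilde y_0+(I-P_A)y$, which costs at most $K$;
\item an excess $\sum_jc_ju_j$ with $|c_j|=(|r+\theta\varepsilon_j|-1)_+/(bn)$.
\end{itemize}
The trivial upper $\ell_1$-estimate, together with the averaging $M_1(r)=r/2$ for $r\le4/3$, bounds the excess by $\frac1{2a}$, where $a=\frac1n\|\sum_{j=1}^nu_j\|$. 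So the $\frac12$ in your target appears only when $a\approx1$, that is, when the blocks are almost isometrically $\ell_1^n$. A general space with a $K$-unconditional basis gives no such blocks; for disjoint normalized blocks in $c_0$, $a=1/n$. A hypothesis-free bound $\frac12+K$ is therefore not what these tools deliver, and your plan leaves the hard step as an unexecuted optimisation.

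The missing idea is to use the super ADP itself, arguing by contradiction, to manufacture near-$\ell_1^n$ blocks. This is the paper's Lemma 3.1:
\begin{itemize}
\item with $S_k=\sum_{j\le k}\theta_jv_j$ and $x_k=S_k/\|S_k\|$, apply the super ADP to $x_k$ and the tail neighbourhood $W_k=\{z\in B_X:\|P_{N_k}z\|<\gamma\}$;
\item truncate the resulting $z_{k+1}$ to a normalized block $v_{k+1}$ supported beyond $N_k$;
\item use a norming functional of $x_k+\theta_{k+1}z_{k+1}$ to get $\|S_{k+1}\|>\|S_k\|+1-\gamma(k+6)$.
\end{itemize}
This yields successive normalized blocks with $\|\sum_{j=1}^mu_j\|>(1-\varepsilon)m$. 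The same kind of norming functional, combined with $K$-unconditionality, then gives the lower $\ell_1^m$-estimate (Proposition 3.2). Only after this does the balanced-sign obstruction give the bound $K+\frac1{2(1-\varepsilon)}<2$ for $K<3/2$. In your plan the super ADP enters only in the last line, so the central optimisation has no structure to work with.
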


We show that if a Banach space with a $K$-unconditional basis has the super ADP, then, for every $n\in\mathbb N$ and every $\eta>0$, it contains successive normalized block vectors that are $K(1+\eta)$-equivalent to the canonical basis of $\ell_1^n$. 
More precisely, these vectors can be chosen so that their positive sum has norm arbitrarily close to $n$ and they satisfy the canonical upper $\ell_1$-estimate. 
A finite-dimensional obstruction based on balanced sign combinations then shows that such block structures are incompatible with the local extremal behaviour required by the super ADP whenever $K<3/2$.

The approach is related to the finite representability programme of Krivine and Rosenthal in~\cite{Rosenthal1978} and Odell and Schlumprecht in~\cite{OdellSchlumprecht1993}, although the $\ell_1$-geometry is obtained here directly from the super ADP through a block-growth mechanism.
This reduction allows us to work entirely within the $\ell_1$-framework.


Having established the stronger obstruction to the super ADP for $K$-unconditional bases with $1\leq K<3/2$, we next turn to the weak-topological question posed in~\cite[Question~5.1]{LooPerreau2026}. 
L\~oo and Perreau proved that $(B_X,w)$ has a countable $\pi$-base for the binary tree space, the modified binary tree space and the countably branching tree space. 
We give a complete positive answer to their question by proving that $(B_X,w)$ admits a countable $\pi$-base whenever $X$ has a $1$-unconditional basis. 
Since the argument relies essentially on the invariance of the unit ball under coordinatewise unimodular multipliers, it does not extend directly to arbitrary bounded convex subsets. 
We therefore also show that every bounded convex subset of $X$ admits a countable $\pi$-base for its relative weak topology whenever $X$ has a Schauder basis that is shrinking or boundedly complete, and we further develop two permanence principles extending these mechanisms to Schauder decompositions and unconditional sums.

Finally, we address a quantitative renorming problem from~\cite{LooPerreau2026}.
Starting from the binary tree space $X_T$, L\~oo and Perreau show that a suitable symmetrization of its positive unit ball gives an equivalent renorming with a $2$-unconditional basis whose unit ball is not SCD.
They ask whether the constant $2$ can be improved, that is, whether one can obtain a Banach space with a $k$-unconditional basis, for some $k\in(1,2)$, whose unit ball is not SCD; see~\cite[Question~5.4]{LooPerreau2026}.
We answer this question in the strongest possible form: for every $k>1$, there exists a Banach space with a $k$-unconditional basis whose unit ball is not SCD.
This result does not address the stronger problem of whether one can find an unconditional basis whose unit ball has no SCD points, asked in~\cite[Question~5.5]{LooPerreau2026}, but it shows that non-SCD unit balls occur arbitrarily close to the $1$-unconditional regime.


Taken together, these results reveal a sharp quantitative contrast. 
On the one hand, the super ADP is incompatible with every $K$-unconditional basis whenever $1\leq K<3/2$. 
On the other hand, exact $1$-unconditionality guarantees that the weak unit ball admits a countable $\pi$-base, whereas for every $k>1$ there exists a Banach space with a $k$-unconditional basis whose unit ball is not SCD. 
At the same time, the binary tree space shows that exact $1$-unconditionality alone does not ensure the existence of countable weak $\pi$-bases for arbitrary bounded convex subsets; this stronger conclusion is recovered under additional hypotheses such as shrinkingness or bounded completeness. 
Thus, the paper connects three phenomena treated in~\cite{LooPerreau2026}: the weak-topological obstruction produced by the super ADP, the existence of countable $\pi$-bases for weak unit balls, and the quantitative instability of SCD behaviour under renormings with unconditional constants arbitrarily close to $1$.


The paper is organized as follows. 
In Section~\ref{sec:preliminares}, we collect the basic notation concerning bases, block sequences and finite equivalence, and recall the notions of countable $\pi$-bases and SCD sets. 
In Section~\ref{sec:l1-reduction}, we establish the asymptotic $\ell_1$-structure forced by the super ADP in spaces with unconditional bases. 
In Section~\ref{sec:finite-obstructions}, we develop a finite-dimensional obstruction and use it to prove Theorem~\ref{thm:intro-main}. 
In Section~\ref{sec:pibase}, we prove that the weak unit ball of every Banach space with a $1$-unconditional basis admits a countable $\pi$-base, thereby answering~\cite[Question~5.1]{LooPerreau2026}. 
We also show that every bounded convex subset of a Banach space with a Schauder basis that is shrinking or boundedly complete admits a countable $\pi$-base for its relative weak topology, and establish two permanence principles extending these results to shrinking Schauder decompositions and unconditional sums over boundedly complete Banach sequence spaces. 
Finally, in Section~\ref{sec:non-scd}, we prove that, for every $k>1$, there exists a Banach space with a $k$-unconditional basis whose unit ball is not SCD.

\section{Preliminaries}\label{sec:preliminares}

In this section, we collect the notation and auxiliary material used throughout the paper.
All Banach spaces are considered over the scalar field $\mathbb K\in\{\mathbb R,\mathbb C\}$, unless stated otherwise, with $\mathbb R$ being the real numbers and $\mathbb C$ the complex numbers.
We denote $\mathbb T:=\{z\in\mathbb C \colon |z|=1\}$ the unit circle.
In the real case, $\mathbb T := \{-1,1\}$.
If $z\in \mathbb C$, we denote the real part of $z$ by $\operatorname{Re}(z)$.
Given a Banach space $X$, we also denote by $S_X$ and $B_X$ the unit sphere and closed unit ball of $X$, respectively, and $X^*$ the (topological) dual of $X$.
Also, if $A\subset X$, the diameter of $A$ is denoted by $\operatorname{diam}(A)$.

If $X$ is a Banach space with a Schauder basis $(e_j)_{j=1}^\infty$, we will denote $x\in c_{00}$ for a vector $x\in X$ if there exists $j_0\in \mathbb N$ such that $x$ can be written as
	$$
	x=\sum_{j=1}^{j_0} a_j e_j.
	$$
Let us now recall the notion of a $k$-unconditional basis for some $k\geq 1$.

\begin{definition}
	Let $X$ be a Banach space with a Schauder basis $(e_j)_{j=1}^\infty$ and let $k\geq1$.
	We say that $(e_j)_{j=1}^\infty$ is \emph{$k$-unconditional} if, for every $N\in\mathbb N$ and every choice of scalars $a_1,\dots,a_N$, $b_1,\dots,b_N$ satisfying $|a_j|\le |b_j|$ for every $j \in \{1,\dots,N\}$, we have
		\[
		\left\|\sum_{j=1}^N a_je_j\right\| \le k\left\|\sum_{j=1}^N b_je_j\right\|.
		\]
\end{definition}

For every finite set $A\subset\mathbb N$, we denote by
	\[
	P_Ax := \sum_{j\in A} e_j^*(x)e_j,
	\]
for all $x\in X$, the corresponding coordinate projection.
If the basis is $1$-unconditional, then $\|P_A\|\le1$ for every finite set $A\subset\mathbb N$ and, for all $\varepsilon =(\varepsilon_j)_{j=1}^\infty \subset \mathbb T$, the coordinate multiplier operator
	$$
	S_\varepsilon \left( \sum_{j=1}^\infty a_j e_j \right) = \sum_{j=1}^\infty \varepsilon_j a_j e_j
	$$
is an isometry.	

For $N\in\mathbb N$, we also write
	\[
	P_Nx := \sum_{j=1}^N e_j^*(x)e_j,
	\]
for all $x\in X$, the canonical basis projection onto $\operatorname{span}\{e_1,\dots,e_N\}$.

\medskip

We shall repeatedly work with block vectors and block sequences.
We recall below the necessary definitions and notation.

\begin{definition}
	Let $(e_j)_{j=1}^\infty$ be a Schauder basis of $X$.
	A sequence $(u_k)_{k\in I}$, where either $I=\{1,\dots,n\}$ for some $n\in \mathbb N$ or $I=\mathbb N$, is called a \emph{block sequence} of $(e_j)_{j=1}^\infty$ if there exist finite sets $F_k\subset\mathbb N$ such that $\max F_k<\min F_{k+1}$ and $u_k\in\operatorname{span}\{e_j \colon j\in F_k\}$ for every $k\in I$.
	If, moreover, $\|u_k\|=1$ for every $k\in I$, we say that $(u_k)_{k\in I}$ is a \emph{normalized block sequence}.
\end{definition}

The support of a vector
	\[
	x=\sum_{j=1}^\infty a_je_j \in X
	\]
with respect to the basis $(e_j)_{j=1}^\infty$ is denoted by
	\[
	\supp(x) := \{j\in\mathbb N \colon a_j\neq0\}.
	\]

\medskip

We next recall the standard notion of finite equivalence between sequences.

\begin{definition}
	Let $n\in \mathbb N$, $(x_j)_{j=1}^n\subset X$ and $(y_j)_{j=1}^n\subset Y$ be finite sequences of nonzero vectors, and let $K\ge1$.
	We say that $(x_j)_{j=1}^n$ and $(y_j)_{j=1}^n$ are \emph{$K$-equivalent} if, for every choice of scalars $a_1,\dots,a_n$, we have
		\[
		\frac1K \left\|\sum_{j=1}^n a_jy_j\right\| \le \left\|\sum_{j=1}^n a_jx_j\right\| \le K\left\|\sum_{j=1}^n a_jy_j\right\|.
		\]
\end{definition}

In particular, a finite sequence $(u_1,\dots,u_n)\subset X$ is $K$-equivalent to the canonical basis of $\ell_1^n$ if
	\[
	\frac1K\sum_{j=1}^n |a_j| \le \left\|\sum_{j=1}^n a_ju_j\right\| \le K\sum_{j=1}^n |a_j|
	\]
for every choice of scalars $a_1,\dots,a_n$.

\medskip

We shall also use the classical notion of a shrinking basis and a boundedly complete basis.

\begin{definition}
	Let $X$ be a Banach space with a Schauder basis $(e_j)_{j=1}^\infty$.
	We say that $(e_j)_{j=1}^\infty$ is \emph{shrinking} if the associated biorthogonal sequence $(e_j^*)_{j=1}^\infty$ is a Schauder basis of $X^*$.
	We say that $(e_j)_{j=1}^\infty$ is \emph{boundedly complete} if $\sum_{j=1}^\infty a_j e_j$ converges whenever the sequence of scalars $(a_j)_{j=1}^\infty$ satisfies
		$$
		\sup_{n\in \mathbb N} \left\|\sum_{j=1}^n a_j e_j\right\| < \infty.
		$$
\end{definition}

We next recall the notion of a countable $\pi$-base.

\begin{definition}
	Let $(T,\tau)$ be a topological space.
	A family $\mathcal U$ of nonempty open subsets of $T$ is called a \emph{$\pi$-base} for $(T,\tau)$ if every nonempty open subset of $T$ contains some member of $\mathcal U$.
	If $\mathcal U$ can be chosen countable, we say that $(T,\tau)$ has a \emph{countable $\pi$-base}.
\end{definition}

Finally, we recall the notions related to SCD sets.
We follow the standard terminology introduced in~\cite{AvilesKadetsMartinMeriShepelska2010} (see also~\cite{KadetsPerezWerner2018,KadetsBook2025}).
We shall also use the pointwise version of slicely countable determination introduced in~\cite{LangemetsLooMartinRueda2024}.

\begin{definition}
	Let $A$ be a bounded subset of a Banach space $X$.
	A \emph{slice} of $A$ is a nonempty subset of the form
		\[
		S(A,x^*,\alpha) := \left\{ x\in A \colon \operatorname{Re}x^*(x)>\sup_{a\in A}\operatorname{Re}x^*(a)-\alpha\right\},
		\]
	where $x^*\in X^*\setminus{0}$ and $\alpha>0$.
\end{definition}

\begin{definition}
	Let $A$ be a bounded subset of a Banach space $X$, and let $x\in A$.
	A sequence of slices $(S_n)_{n=1}^\infty$ of $A$ is \emph{determining for $x$} if, whenever one chooses $x_n\in S_n$ for every $n\in\mathbb N$, one has $x\in \overline{\operatorname{conv}} \{x_n \colon n\in\mathbb N\}$.
\end{definition}

\begin{definition}
	Let $A$ be a bounded subset of a Banach space $X$, and let $x\in A$.
	We say that $x$ is a \emph{slicely countably determined point} of $A$, or an \emph{SCD point} of $A$, if there exists a sequence of slices of $A$ which is determining for $x$.
\end{definition}

\begin{definition}
	Let $A$ be a bounded subset of a Banach space $X$.
	We say that $A$ is \emph{slicely countably determined} (SCD) if there exists a sequence of slices $(S_n)_{n=1}^\infty$ of $A$ such that, whenever $B\subset A$ intersects every $S_n$, we have $A\subset \overline{\operatorname{conv}}(B)$.
	(For bounded closed convex separable sets, this is equivalent to requiring that every point of $A$ is an SCD point.)
\end{definition}

\section{Asymptotic $\ell_1$ structure and the super ADP}\label{sec:l1-reduction}

We will show that the super ADP forces the presence of asymptotic $\ell_1$-type structures.
This allows us to avoid the general finite-representability reduction and to work directly in the $\ell_1$-framework.


\medskip

We begin by establishing a linear growth mechanism for sums of blocks.


\begin{lemma}\label{lem:super-adp-linear-growth}
	Let \(X\) be a Banach space with a Schauder basis $(e_j)_{j=1}^\infty$. 
	If \(X\) has the super ADP, then for every $m\in\mathbb N$ and $\varepsilon>0$, there exist successive blocks $v_1,\dots,v_m\in S_X$ and scalars $\theta_1,\dots,\theta_m\in\mathbb T$ such that
	\[
	\left\|\sum_{j=1}^m \theta_j v_j\right\| > (1-\varepsilon)m.
	\]
\end{lemma}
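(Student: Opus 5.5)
The plan is to build the blocks one at a time by induction, using the super ADP at each step with the normalized partial sum as the point $x$. The weakly open set will force the new vector to live essentially beyond the supports already used.

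\emph{Setup.} Let $C$ be the basis constant of $(e_j)_{j=1}^\infty$. Fix $m$ and $\varepsilon$, and choose a small $\delta>0$ depending only on $m$, $\varepsilon$ and $C$. I will construct successive blocks $v_1,\dots,v_k\in S_X$ and $\theta_j\in\mathbb T$ such that
\[
s_k:=\Bigl\|\sum_{j\le k}\theta_jv_j\Bigr\|\ \ge\ k-c_k\delta ,
\]
where $c_k$ depends only on $k$ and $C$. Taking $\delta$ with $c_m\delta<\varepsilon m$ then finishes the proof.

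\emph{Base case.} Take $v_1=e_1/\|e_1\|$ and $\theta_1=1$.

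\emph{Inductive step.} Put $u=\sum_{j\le k}\theta_jv_j$, and let $N$ be the largest index in the supports of $v_1,\dots,v_k$. Consider
\[
W=\{y\in B_X\colon |e_i^*(y)|<\eta \text{ for } i\le N\},
\]
which is relatively weakly open in $B_X$. It meets $S_X$, since $e_{N+1}/\|e_{N+1}\|\in W$.

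\begin{enumerate}
\item The super ADP applied to $u/s_k$ and $W$ gives $y\in W$ and $\theta\in\mathbb T$ with $\|u/s_k+\theta y\|>2-\delta$. In particular $\|y\|>1-\delta$.
\item Replace $y$ by the block $z=P_My-P_Ny$, where $M>N$ is large enough that $\|y-P_My\|<\eta$. Since $\|P_Ny\|\le\sum_{i\le N}|e_i^*(y)|\|e_i\|\le \eta\sum_{i\le N}\|e_i\|$, we get $\|y-z\|\le \eta\bigl(1+\sum_{i\le N}\|e_i\|\bigr)$. Choosing $\eta$ after $N$ is known makes this smaller than $\delta$.
\item Set $v_{k+1}=z/\|z\|$ and $\theta_{k+1}=\theta$. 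Then $\|v_{k+1}-y\|\le 3\delta$, and $v_{k+1}$ is supported in $(N,M]$, so the blocks remain successive.
\end{enumerate}

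\emph{Key norm estimate.} This is the elementary inequality that converts the "almost $2$" conclusion at the normalized point into additive growth. For $s=\|a\|\ge1$, $\|b\|\le1$ and $\|a/s+b\|>2-\delta$, write
\[
a+b=s\Bigl(\frac as+b\Bigr)-(s-1)b .
\]
This gives $\|a+b\|\ge s(2-\delta)-(s-1)=s+1-s\delta$. Apply it with $a=u$ and $b=\theta y$, then account for the error $\|\theta v_{k+1}-\theta y\|\le3\delta$. The result is
\[
s_{k+1}\ge s_k+1-s_k\delta-3\delta\ge s_k+1-(k+3)\delta .
\]
By induction $s_m\ge m-c\,m^2\delta$, which gives $\|\sum_{j=1}^m\theta_jv_j\|>(1-\varepsilon)m$ once $\delta<\varepsilon/(cm)$.

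\emph{Main obstacle.} The only delicate point is the order of quantifiers in the truncation step. The tolerance $\eta$ in the definition of $W$ must be chosen after $N$ is known, so that killing the first $N$ coordinates costs less than $\delta$. The tail cutoff $M$ must be chosen after $y$ is obtained. The basis constant enters only through the bound on $\|P_Ny\|$, so no unconditionality is needed.
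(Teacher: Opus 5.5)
Your proposal is correct and follows essentially the same inductive scheme as the paper: apply the super ADP at the normalized partial sum, use a weakly open set that kills the first $N$ coordinates, truncate the resulting vector to a block, and obtain additive growth $s_{k+1}\ge s_k+1-O(k)\delta$. The differences are cosmetic. The paper defines $W$ by $\|P_{N_k}z\|<\gamma$ and gets the growth from a norming functional of $x_k+\theta_{k+1}z_{k+1}$, whereas you use coordinate functionals and the identity $a+b=s(a/s+b)-(s-1)b$. That identity needs $s_k\ge 1$, which holds inductively once $\delta$ is small. Also, the basis constant plays no role in your argument: only $\sum_{i\le N}\|e_i\|$ enters, and it is absorbed by the choice of $\eta$.
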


\begin{proof}
	Fix \(m\in\mathbb N\) and \(\varepsilon>0\). If \(m=1\), simply take any block \(v_1\in S_X\) and define \(\theta_1=1\). Then
		\[
		\left\|\sum_{j=1}^m\theta_jv_j\right\| = 1 > (1-\varepsilon)m,
		\]
	and the result follows.
	
	Assume now that \(m>1\). 
	Choose \(\gamma\in(0,1/3)\) sufficiently small so that
		\[
		\gamma(m^2+6m)<\min\{\varepsilon m,1/2\}.
		\]
	We will construct inductively successive blocks $v_1,\dots,v_m\in S_X$ and scalars $\theta_1,\dots,\theta_m\in\mathbb T$ satisfying
		\begin{equation}\label{eq:inductive-growth}
			\left\| \sum_{j=1}^k\theta_jv_j\right\| \ge k-\gamma\sum_{j=1}^{k-1}(j+6),
		\end{equation}
	for every $k\in \{1,\ldots,m\}$, where the empty sum is understood to be zero when $k=1$.
	For convenience, write
		\[
		S_k := \sum_{j=1}^k\theta_jv_j,
		\]
	for every $k\in \{1,\ldots,m\}$.
	
	Choose first any block \(v_1\in S_X\) and define \(\theta_1=1\). 
	Then \(S_1=v_1\), and therefore
		\[
		\|S_1\| = 1 = 1-\gamma\sum_{j=1}^{0}(j+6).
		\]
	Hence, \eqref{eq:inductive-growth} holds for \(k=1\).
	
	Now assume that for some \(1\le k<m\), we already have blocks $v_1,\dots,v_k\in S_X$ and scalars $\theta_1,\dots,\theta_k \in \mathbb T$ satisfying \eqref{eq:inductive-growth}. 
	Since
		\[
		\sum_{j=1}^{k-1}(j+6) < \sum_{j=1}^{m-1}(j+6) = \frac{m(m-1)}2+6(m-1) < m^2+6m,
		\]
	the choice of \(\gamma\) yields $\|S_k\| > k-1/2 > 0$.
	In particular, \(S_k\neq0\), so we may define
		\begin{equation}\label{eq:def-xk}
			x_k:=\frac{S_k}{\|S_k\|}\in S_X.
		\end{equation}
	
	Choose \(N_k\in\mathbb N\) sufficiently large so that
		\[
		\bigcup_{j=1}^k \supp(v_j) \subset \{1,\dots,N_k\},
		\]
	and consider the set
	\[
	W_k:=
	\{z\in B_X \colon \|P_{N_k}z\|<\gamma\}.
	\]
	
	Observe that \(W_k\cap S_X\neq\varnothing\). Indeed, since the basis is
	infinite, the vector $\frac{e_{N_k+1}}{\|e_{N_k+1}\|} \in S_X$ satisfies \(P_{N_k} \frac{e_{N_k+1}}{\|e_{N_k+1}\|} = 0\). 
	Hence, \(\frac{e_{N_k+1}}{\|e_{N_k+1}\|} \in W_k\cap S_X\).
	Moreover, \(W_k\) is relatively weakly open in \(B_X\) since \(P_{N_k}\) has finite rank. 
	Therefore, by the super ADP applied to \(x_k\) and \(W_k\), there exist $z_{k+1}\in W_k$ and $\theta_{k+1}\in\mathbb T$ such that
		\begin{equation}\label{eq:almost-daugavet}
			\|x_k+\theta_{k+1}z_{k+1}\|>2-\gamma.
		\end{equation}
	In particular, $\|z_{k+1}\|>1-\gamma$.
	
	Define $y_{k+1}:=(I-P_{N_k})z_{k+1}$.
	Since \(z_{k+1}\in W_k\), we have
	\begin{equation}\label{eq:small-head}
		\|P_{N_k}z_{k+1}\|<\gamma.
	\end{equation}
	Consequently,
	\begin{equation}\label{eq:large-tail}
		\|y_{k+1}\|
		\ge
		\|z_{k+1}\|-\|P_{N_k}z_{k+1}\|
		>
		1-2\gamma.
	\end{equation}
	As \((e_j)_{j=1}^\infty\) is a Schauder basis, the sequence \((P_Mz_{k+1})_{M=1}^\infty\) converges to \(z_{k+1}\) in norm. 
	Hence,
		\[
		(P_M-P_{N_k})z_{k+1} = P_Mz_{k+1}-P_{N_k}z_{k+1} \longrightarrow z_{k+1}-P_{N_k}z_{k+1} = y_{k+1},
		\]
	as $M\to \infty$.
	Therefore, we may choose \(M_k>N_k\) such that, defining $q_{k+1}:=(P_{M_k}-P_{N_k})z_{k+1}$, we have
		\begin{equation}\label{eq:q-close}
			\|q_{k+1}-y_{k+1}\|<\gamma.
		\end{equation}
	Combining \eqref{eq:large-tail} and \eqref{eq:q-close}, we obtain
		\begin{equation}\label{eq:q-large}
			\|q_{k+1}\| \ge \|y_{k+1}\|-\|q_{k+1}-y_{k+1}\| > 1-3\gamma > 0.
		\end{equation}
	
	Define now
		\[
		v_{k+1}:=\frac{q_{k+1}}{\|q_{k+1}\|} \in S_X.
		\]
	Note that \(v_{k+1}\) is supported after \(N_k\). 
	In particular, $v_1,\dots,v_k,v_{k+1}$ remain successive normalized blocks.
	We claim that
		\begin{equation}\label{eq:v-close}
			\|v_{k+1}-z_{k+1}\|<5\gamma.
		\end{equation}
	Indeed, first by \eqref{eq:small-head} and \eqref{eq:q-close} we have
		\begin{align}
			\|v_{k+1}-z_{k+1}\| & \le \|v_{k+1}-q_{k+1}\| + \|q_{k+1}-y_{k+1}\| + \|y_{k+1}-z_{k+1}\| \nonumber \\
			& = \|v_{k+1}-q_{k+1}\| + \|q_{k+1}-y_{k+1}\| + \|P_{N_k} z_{k+1}\| \nonumber \\
			& < \|v_{k+1}-q_{k+1}\| + 2\gamma. \label{eq:triangle}
		\end{align}
	Also, by \eqref{eq:almost-daugavet} and the fact that $|\theta_{k+1}|=\|x_k\|=1$, we obtain $\|z_{k+1}\|> 1-\gamma$.
	Therefore, since $\|z_{k+1}\|\leq 1$, it follows that $0\leq 1-\|z_{k+1}\|< \gamma$.
	Consequently, again by \eqref{eq:small-head} and \eqref{eq:q-close} we have
		$$
		\|v_{k+1}-q_{k+1}\| = |1-\|q_{k+1}\|| \leq \|q_{k+1}-y_{k+1}\| + \|y_{k+1}-z_{k+1}\| + 1-\|z_{k+1}\| < 3\gamma.
		$$
	Hence, combining the latter estimate with \eqref{eq:triangle} yields \eqref{eq:v-close}.
	
	By \eqref{eq:almost-daugavet}, we have \(\|x_k+\theta_{k+1}z_{k+1}\|>2-\gamma\). 
	Hence, by Hahn--Banach theorem, after multiplying by a suitable unimodular scalar, if necessary, there exists \(f\in S_{X^*}\) such that $f(x_k+\theta_{k+1}z_{k+1})=\|x_k+\theta_{k+1}z_{k+1}\|$.
	In particular,
		\begin{equation}\label{eq:norming}
			\operatorname{Re}f(x_k+\theta_{k+1}z_{k+1}) > 2-\gamma.
		\end{equation}
	Since \(\|x_k\|=1\), \(\|z_{k+1}\|\le1\), \(\|f\|=1\) and $|\theta_{k+1}|=1$, it follows from \eqref{eq:norming} that
		\begin{equation}\label{eq:real-estimates}
			\operatorname{Re}f(x_k)>1-\gamma \qquad\text{and}\qquad \operatorname{Re}f(\theta_{k+1}z_{k+1})>1-\gamma.
		\end{equation}
	Note that
		\begin{equation}\label{eq:omega-v-k+1}
			\operatorname{Re}f(\theta_{k+1}v_{k+1}) = \operatorname{Re}f(\theta_{k+1}z_{k+1}) + \operatorname{Re}f(\theta_{k+1}(v_{k+1}-z_{k+1})).
		\end{equation}
	Since \(\|f\|=1\) and \(|\theta_{k+1}|=1\), using \eqref{eq:v-close} we obtain
		\[
		\operatorname{Re}f(\theta_{k+1}(z_{k+1}-v_{k+1})) \leq \left|f(\theta_{k+1}(v_{k+1}-z_{k+1}))\right| \le \|v_{k+1}-z_{k+1}\| < 5\gamma.
		\]
	Combining the latter with \eqref{eq:real-estimates} and \eqref{eq:omega-v-k+1}, we deduce that
		\begin{equation}\label{eq:real-v}
			\operatorname{Re}f(\theta_{k+1}v_{k+1}) > 1-6\gamma.
		\end{equation}
	Then, by \eqref{eq:def-xk}, \eqref{eq:real-estimates}, and \eqref{eq:real-v},
		\begin{align*}
			\|S_{k+1}\| & = \|S_k+\theta_{k+1}v_{k+1}\| \\
			& \ge \operatorname{Re}f(S_k+\theta_{k+1}v_{k+1}) \\
			& = \|S_k\|\operatorname{Re}f(x_k) + \operatorname{Re}f(\theta_{k+1}v_{k+1}) \\
			& > (1-\gamma)\|S_k\|+1-6\gamma.
		\end{align*}
	Since \(\|S_k\|\le k\), it follows that $\|S_{k+1}\| > \|S_k\|+1-\gamma(k+6)$.
	Iterating the previous inequality from \(S_1\), we obtain
		\[
		\|S_m\|	> m-\gamma\sum_{j=1}^{m-1}(j+6).
		\]
	As
		\[
		\sum_{j=1}^{m-1}(j+6)\le m^2+6m,
		\]
	the choice of \(\gamma\) yields
		\[
		\|S_m\| > m-\gamma(m^2+6m) > (1-\varepsilon)m.
		\]
	Therefore,
		\[
		\left\|\sum_{j=1}^m\theta_jv_j\right\| > (1-\varepsilon)m.
		\]
	This completes the proof.
\end{proof}

\medskip
		
We will now show that this implies the existence of asymptotic \(\ell_1\)-structures.
		

\begin{proposition}\label{prop:super-adp-l1-blocks}
	Let $K\geq1$, and let $X$ be a Banach space with a $K$-unconditional basis.
	If \(X\) has the super ADP, then for every \(m\in\mathbb N\) and $0<\varepsilon<1/m$, there exist successive blocks $u_1,\dots,u_m\in S_X$ such that
		$$
		\left\|\sum_{j=1}^m u_j\right\| > (1-\varepsilon)m
		$$
	and, for every $a_1,\ldots,a_m\in \mathbb K$,
		$$
		\frac{1-m\varepsilon}{K} \sum_{j=1}^m |a_j| \leq \left\|\sum_{j=1}^m a_j u_j\right\| \leq \sum_{j=1}^m |a_j|.
		$$
	In particular, for every $\eta>0$, there exist successive normalized blocks which are $K(1+\eta)$-equivalent to the canonical basis of $\ell_1^m$.
\end{proposition}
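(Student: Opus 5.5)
We derive this from Lemma~\ref{lem:super-adp-linear-growth}, absorbing the unimodular scalars into the blocks and then reading off the $\ell_1$-estimates from a single norming functional together with $K$-unconditionality.

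\emph{Choice of the blocks.} Fix $m\in\mathbb N$ and $0<\varepsilon<1/m$. By Lemma~\ref{lem:super-adp-linear-growth} (whose hypothesis is satisfied, since a $K$-unconditional basis is in particular a Schauder basis) there are successive blocks $v_1,\dots,v_m\in S_X$ and $\theta_1,\dots,\theta_m\in\mathbb T$ with $\|\sum_j\theta_jv_j\|>(1-\varepsilon)m$. Put $u_j:=\theta_jv_j$. These are still successive blocks with the same supports, they lie in $S_X$, and $\|\sum_{j=1}^m u_j\|>(1-\varepsilon)m$, which is the first claim. The upper estimate $\|\sum a_ju_j\|\le\sum|a_j|$ is just the triangle inequality.

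\emph{Lower estimate.} By Hahn--Banach, choose $f\in S_{X^*}$ with $\operatorname{Re} f(\sum_j u_j)=\|\sum_j u_j\|>(1-\varepsilon)m$. Since $\operatorname{Re} f(u_i)\le 1$ for every $i$, we get for each fixed $j$
\[
\operatorname{Re} f(u_j)>(1-\varepsilon)m-(m-1)=1-m\varepsilon>0 .
\]
Hence, for nonnegative coefficients,
\[
\Bigl\|\sum_j |a_j|u_j\Bigr\|\ge \operatorname{Re} f\Bigl(\sum_j|a_j|u_j\Bigr)\ge (1-m\varepsilon)\sum_j|a_j| .
\]
To pass to arbitrary scalars, note that the $u_j$ are finitely supported with pairwise disjoint supports. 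Consequently, in the finite expansions in $(e_i)$ of $\sum_j |a_j|u_j$ and $\sum_j a_ju_j$, every coefficient has the same modulus. The definition of $K$-unconditionality, applied with the roles of $a$ and $b$ chosen appropriately, then yields
\[
\Bigl\|\sum_j|a_j|u_j\Bigr\|\le K\Bigl\|\sum_j a_ju_j\Bigr\|,
\]
which gives $\frac{1-m\varepsilon}{K}\sum_j|a_j|\le\|\sum_j a_ju_j\|$.

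\emph{The ``in particular''.} Given $\eta>0$, choose $\varepsilon\in(0,1/m)$ so small that $(1-m\varepsilon)^{-1}\le 1+\eta$. The lower bound is then at least $\frac{1}{K(1+\eta)}\sum_j|a_j|$. The upper bound $\sum_j|a_j|$ is trivially at most $K(1+\eta)\sum_j|a_j|$. Thus $(u_j)_{j=1}^m$ is $K(1+\eta)$-equivalent to the unit vector basis of $\ell_1^m$.

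The only step needing care is the comparison via unconditionality: one must use that the blocks are finitely supported and disjointly supported, so that the definition, which is stated for finite sums in $(e_i)$, applies coordinatewise. The rest is the routine averaging argument with the norming functional, for which the restriction $\varepsilon<1/m$ is exactly what keeps $1-m\varepsilon$ positive.
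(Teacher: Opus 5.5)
Your proof is correct and matches the paper's argument essentially step for step. You absorb the unimodular scalars from Lemma~\ref{lem:super-adp-linear-growth} into the blocks. You then use a norming functional to get $\operatorname{Re} f(u_j)>1-m\varepsilon$ for each $j$; you do this directly, while the paper argues by contradiction. Finally you pass from $\sum_j|a_j|u_j$ to $\sum_j a_ju_j$ by $K$-unconditionality on the disjoint finite supports, and you choose $\varepsilon$ for the ``in particular'' part in the same way as the paper.
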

	
\begin{proof}
	Fix $m\in\mathbb N$ and $0<\varepsilon<1/m$.
	
	By Lemma~\ref{lem:super-adp-linear-growth}, there exist successive blocks $v_1,\dots,v_m\in S_X$ and scalars $\theta_1,\dots,\theta_m\in\mathbb T$ such that
		\begin{equation}\label{eq:l1-growth}
			\left\|\sum_{j=1}^m\theta_jv_j\right\| > (1-\varepsilon)m.
		\end{equation}
	Define $u_j:=\theta_jv_j$, for every $j\in \{1,\ldots,m\}$.
	Then, \(u_1,\dots,u_m \in S_X\) are successive blocks and \eqref{eq:l1-growth} becomes
		\begin{equation}\label{eq:u-growth}
			\left\|\sum_{j=1}^m u_j\right\| > (1-\varepsilon)m.
		\end{equation}
	By Hahn--Banach theorem, there exists \(f\in S_{X^*}\) such that
		\[
		\operatorname{Re} f\left(\sum_{j=1}^m u_j\right) > (1-\varepsilon)m.
		\]
	Since \(\|u_j\|=1\) and \(\|f\|=1\), we have $\operatorname{Re}f(u_j) \le 1$, for every $j\in \{1,\ldots,m\}$.
	
	We claim that
		\begin{equation}\label{eq:functional-lower}
			\operatorname{Re}f(u_j)>1-m\varepsilon,
		\end{equation}
	for every $j\in \{1,\ldots,m\}$.
	Indeed, by means of contradiction, assume that there exists \(j_0\in\{1,\dots,m\}\) such that $\operatorname{Re}f(u_{j_0}) \le 1-m\varepsilon$.
	Then,
		\[
		\operatorname{Re}f\left( \sum_{j=1}^m u_j\right) = \sum_{j=1}^m\operatorname{Re}f(u_j) \le m-m\varepsilon = (1-\varepsilon)m,
		\]
	contradicting the choice of \(f\). 
	Hence, \eqref{eq:functional-lower} holds.
	
	Let now \(a_1,\dots,a_m\) be arbitrary scalars. 
	On the one hand, since $\|u_j\|=1$ for every $j\in \{1,\ldots,m\}$, we have
		$$
		\left\|\sum_{j=1}^m a_j u_j\right\| \leq \sum_{j=1}^m |a_j|.
		$$
	On the other hand, since the vectors $u_1,\ldots,u_m$ have disjoint supports, the vectors 
		$$
		\sum_{j=1}^m a_ju_j \qquad \text{and} \qquad \sum_{j=1}^m |a_j|u_j
		$$
	have coordinate coefficients with the same moduli. 
	Therefore, by the $K$-unconditionality of the basis, 
		$$
		\left\|\sum_{j=1}^m |a_j|u_j\right\| \leq K \left\|\sum_{j=1}^m a_j u_j\right\|.
		$$
	Thus, by \eqref{eq:functional-lower}, we have
		$$
		\left\|\sum_{j=1}^m a_j u_j\right\| \geq \frac{1}{K} \left\|\sum_{j=1}^m |a_j|u_j\right\| \geq \frac{1}{K} \operatorname{Re} f\left( \sum_{j=1}^m |a_j|u_j \right) = \frac{1}{K} \sum_{j=1}^m |a_j| \operatorname{Re} f(u_j) \geq \frac{1-m\varepsilon}{K} \sum_{j=1}^m |a_j|.
		$$

	Finally, for the additional part, let $\eta>0$ and fix 
		$$
		0<\varepsilon<\frac{\eta}{m(1+\eta)}<\frac{1}{m}.
		$$
	Then,
		$$
		1-m\varepsilon > 1-\frac{\eta}{1+\eta} = \frac{1}{1+\eta}.
		$$
	Applying the first part of the proposition with this $\varepsilon$, we obtain successive normalized blocks $u_1,\ldots,u_m$ which are $K(1+\eta)$-equivalent to the canonical basis of $\ell_1^m$.
\end{proof}


\section{Finite-dimensional obstructions}\label{sec:finite-obstructions}

We will develop the finite-dimensional obstruction which will contradict the local extremal behaviour required by the super ADP.

For every \(1\le p<\infty\) and $r>0$, define
	\[
	M_p(r) := \sup_{\theta\in\mathbb T} \left[\frac12\left(|r+\theta|-1\right)_+^p + \frac12\left(|r-\theta|-1\right)_+^p\right]^{1/p} \geq 0,
	\]
where \(t_+:=\max\{t,0\}\).


Before we continue, let us prove the following technical lemma regarding $M_p(r)$ and $M_1(r)$.

\begin{lemma}\label{lem:technical}
	For every $1\leq p<\infty$ and $r>0$, 
		$$
		M_1(r)\leq M_p(r) \leq 2^{1-1/p}M_1(r).
		$$
	Moreover,
		$$
		M_1(r) = \begin{cases}
			\dfrac{1}{2}[r+(r-2)_+], & \text{if } \mathbb K= \mathbb R,\\
			\max\left\{ \dfrac{r}{2},\sqrt{1+r^2}-1 \right\}, & \text{if } \mathbb K= \mathbb C.
		\end{cases}
		$$
	Furthermore, in both the real and complex cases, for every $0< r\leq 4/3$, we have 
		$$
		M_1(r)=\frac{r}{2},
		$$
	and therefore
		$$
		M_p(r) \leq 2^{1-1/p} \frac{r}{2}.
		$$
\end{lemma}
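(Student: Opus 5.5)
The plan is to prove the three parts separately: first the comparison between $M_p$ and $M_1$ by elementary inequalities, then the explicit formula for $M_1$ by a one-variable optimisation over $\theta$, and finally the specialisation to $0<r\le 4/3$.

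\textbf{Comparison with $M_1$.} Fix $\theta\in\mathbb T$ and set $a:=(|r+\theta|-1)_+\ge0$ and $b:=(|r-\theta|-1)_+\ge0$.
\begin{itemize}
\item For the lower bound, the power mean inequality (Jensen's inequality for $t\mapsto t^p$) gives $\frac{a+b}{2}\le\left(\frac{a^p+b^p}{2}\right)^{1/p}$.
\item For the upper bound, superadditivity of $t\mapsto t^p$ on $[0,\infty)$ gives $a^p+b^p\le (a+b)^p$. Hence
\[
\left(\frac{a^p+b^p}{2}\right)^{1/p}\le 2^{-1/p}(a+b)=2^{1-1/p}\,\frac{a+b}{2}.
\]
\end{itemize}
Taking the supremum over $\theta$ in both inequalities yields $M_1(r)\le M_p(r)\le 2^{1-1/p}M_1(r)$.

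\textbf{Formula for $M_1$, real case.} Here $\mathbb T=\{-1,1\}$, and the expression is symmetric under $\theta\mapsto-\theta$, so it suffices to take $\theta=1$. Since $r>0$, we have $|r+1|-1=r$. Moreover $|r-1|-1\le 0$ when $r\le2$, and $|r-1|-1=r-2$ when $r>2$, so $(|r-1|-1)_+=(r-2)_+$. This gives $M_1(r)=\frac12[r+(r-2)_+]$.

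\textbf{Formula for $M_1$, complex case.} Write $\theta=e^{it}$ and $c=\cos t$. Then $|r\pm\theta|=\sqrt{1+r^2\pm2rc}$, and the expression depends only on $c$ and is even in $c$, so we may restrict to $c\in[0,1]$. Define
\[
g(c):=\bigl(s_+(c)-1\bigr)_+ + \bigl(s_-(c)-1\bigr)_+,\qquad s_\pm(c):=\sqrt{1+r^2\pm2rc}.
\]
\begin{itemize}
\item On $[0,1]$ we have $s_+\ge1$ always. The second term is positive exactly when $c<r/2$.
\item On $[0,\min\{1,r/2\}]$ both terms are active, and $g'(c)=r/s_+-r/s_-\le0$ because $s_+\ge s_-$. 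So $g$ is nonincreasing there.
\item On $[r/2,1]$ (when $r/2\le 1$) only the first term survives, and it is increasing in $c$.
\end{itemize}
Therefore the maximum of $g$ is attained at $c=0$ or $c=1$. These give the values $\sqrt{1+r^2}-1$ and $\frac12[r+(r-2)_+]$ respectively, after halving. When $r>2$, the value at $c=1$ equals $r-1<\sqrt{1+r^2}-1$, so it is dominated and $(r-2)_+$ may be dropped. This yields $M_1(r)=\max\{r/2,\sqrt{1+r^2}-1\}$.

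\textbf{The case $0<r\le4/3$.} In the real case, $(r-2)_+=0$, so $M_1(r)=r/2$ directly. In the complex case, since both sides are positive,
\[
\sqrt{1+r^2}-1\le \frac r2 \iff 1+r^2\le\Bigl(1+\frac r2\Bigr)^2 \iff \frac34r^2\le r \iff r\le \frac43 .
\]
So again $M_1(r)=r/2$. The final bound $M_p(r)\le 2^{1-1/p}\,\frac r2$ then follows from the first part.

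\textbf{Main obstacle.} The only delicate step is the complex-case optimisation. One must handle the positive parts carefully, so as to locate the maximiser at an endpoint of $[0,1]$. One must also check that the $(r-2)_+$ contribution is dominated by $\sqrt{1+r^2}-1$ when $r>2$.
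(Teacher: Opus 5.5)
Your proof is correct. The comparison $M_1\le M_p\le 2^{1-1/p}M_1$, the real-case formula and the specialisation to $0<r\le 4/3$ are the same as in the paper; the complex case is where you take a different route.

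The paper avoids calculus there. It writes $s=|r+\theta|\ge t=|r-\theta|$ and splits into two cases:
\begin{itemize}
\item if $t\le 1$, it bounds the expression by $\frac12(s-1)_+\le r/2$, using $s\le r+1$;
\item if $t>1$, it uses the parallelogram identity $s^2+t^2=2(1+r^2)$ together with Cauchy--Schwarz to get $\frac{s+t}{2}-1\le\sqrt{1+r^2}-1$.
\end{itemize}
The matching lower bounds are then obtained separately, by testing $\theta=i$ and $\theta=1$ and noting that $\sqrt{1+r^2}-1>r/2$ when $r>2$.

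You instead reduce to the single variable $c=\cos t\in[0,1]$. A sign analysis of $g'$ then shows that the maximum is attained at an endpoint, that is, at $\theta\in\{\pm1,\pm i\}$. When $r>2$ the interval $[0,\min\{1,r/2\}]$ is all of $[0,1]$, so your explicit comparison with the value at $c=1$ is automatic there.

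What each route buys: yours identifies the maximising $\theta$ explicitly and gives the upper and lower bounds in one pass, at the cost of a differentiability and monotonicity argument. The paper's route is derivative-free and gets the upper bound in a line from the parallelogram identity, but it has to supply the lower bound through separate test values.
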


\begin{proof}
	We begin by proving the first part of the lemma.
	Fix $\theta \in \mathbb T$ and define $a_\theta := \left(|r+\theta|-1\right)_+$ and $b_\theta := \left(|r-\theta|-1\right)_+$.
	On the one hand, by Jensen's inequality
		$$
		\left(\frac{a_\theta+b_\theta}{2}\right)^p \leq \frac{a_\theta^p+b_\theta^p}{2}.
		$$
	Thus,
	$$
	\frac{a_\theta+b_\theta}{2} \leq \left(\frac{a_\theta^p+b_\theta^p}{2}\right)^{1/p} = 2^{-1/p} (a_\theta^p+b_\theta^p)^{1/p} \leq 2^{-1/p} (a_\theta+b_\theta) = 2^{1-1/p} \frac{a_\theta+b_\theta}{2}.
	$$
	Taking the supremum over $\theta\in\mathbb T$ yields the desired chain of inequalities.
	
	We next calculate $M_1(r)$. 
	Assume first that $\mathbb{K}=\mathbb{R}$. 
	Since $\mathbb{T}=\{-1,1\}$, we obtain, for either choice of $\theta$,
		$$
		\left\{\left(|r+\theta|-1\right)_+,\left(|r-\theta|-1\right)_+\right\} = \left\{ r,(r-2)_+\right\}.
		$$
	Therefore,
		$$
		M_1(r) = \frac12\left[r+(r-2)_+\right].
		$$
	
	Assume now that $\mathbb{K}=\mathbb{C}$. 
	Fix $\theta\in\mathbb{T}$ and set $s:=|r+\theta|$ and $t:=|r-\theta|$.
	Replacing $\theta$ by $-\theta$ if necessary, we may assume that $s\geq t$.
	
	Assume first that $t\leq1$. 
	Then, $(t-1)_+=0$ and, since $s\leq r+1$, we obtain
		$$
		\frac12\left[(s-1)_+ + (t-1)_+\right] = \frac12(s-1)_+ \leq \frac r2.
		$$
	
	Assume now that $t>1$, and so $s>1$.
	It is easy to show that $s^2+t^2 = 2(1+r^2)$.
	Hence, by the Cauchy--Schwarz inequality, 
		$$
		\frac12\left[(s-1)_+ + (t-1)_+\right] = \frac{s+t-2}{2} \leq \frac{\sqrt{2(s^2+t^2)}-2}{2} = \sqrt{1+r^2}-1
		$$
	Thus,
	$$
	M_1(r)
	\leq
	\max\left\{
	\frac r2,
	\sqrt{1+r^2}-1
	\right\}.
	$$
	
	For the reverse inequality, taking $\theta=i$ gives $|r+i|=|r-i|=\sqrt{1+r^2}$, and therefore $M_1(r)\geq\sqrt{1+r^2}-1$.
	If $0<r\leq2$, taking $\theta=1$ gives $\left(|r+1|-1\right)_+=r$ and $\left(|r-1|-1\right)_+=0$.
	Hence, $M_1(r)\geq r/2$.
	If $r>2$, then $\sqrt{1+r^2}-1>r/2$, so the value obtained with $\theta=i$ already equals the maximum of the two quantities. 
	Therefore, in the complex case, we have the desired equality.
	
	Finally, in the complex case, it is easy to prove that $\sqrt{1+r^2}-1\leq r/2$ if and only if $0<r\leq 4/3$.
	In the real case, we have $M_1(r)=r/2$ for every $0<r\leq2$. 
	Consequently, in both the real and complex cases, $M_1(r)=r/2$ whenever $0<r\leq 4/3$.
	Combining this with the first part of the proof, we conclude that
		$$
		M_p(r) \leq 2^{1-\frac1p}M_1(r)	= 2^{1-\frac1p}\frac r2 = 2^{-1/p}r
		$$
	for every $0<r\leq4/3$.
\end{proof}

\begin{remark}
	It is possible to obtain an explicit formula for $M_p(r)$ in both the real and complex cases, analogous to the formula for $M_1(r)$ in Lemma~\ref{lem:technical}. 
	However, the inequalities relating $M_1(r)$ and $M_p(r)$ show that, for the estimates needed below, it is enough to study $M_1(r)$.
\end{remark}

The following lemma provides a technical obstruction for a Banach space with $K$-unconditional basis to have the super ADP based on $M_p(r)$.
In order to state it, let us consider the following.
Let $1\leq p<\infty$, let $n\in\mathbb{N}$ be even, and let $u_1,\ldots,u_n$ be successive normalized block vectors. 
Choose $\varepsilon_1,\ldots,\varepsilon_n\in\{-1,1\}$ such that
	$$
	\sum_{j=1}^n \varepsilon_j=0,
	$$
and define
	$$
	x_0:=\frac{1}{n^{1/p}}\sum_{j=1}^n u_j \qquad \text{and} \qquad y_0:=\frac{1}{n^{1/p}}\sum_{j=1}^n \varepsilon_j u_j.
	$$
Set
	$$
	a:=\|x_0\|, \qquad b:=\|y_0\|, \qquad \text{and} \qquad r:=\frac{b}{a}.
	$$
	
\begin{lemma} \label{lem:local-obstruction-finite-p}
	With the above notation, let $K\geq 1$, and let $X$ be a Banach space with a $K$-unconditional basis. Assume that, for some $C\geq 1$, the successive normalized block vectors $u_1,\ldots,u_n$ satisfy
		\begin{equation}\label{eq:equivalent}
			\left\|\sum_{j=1}^n \lambda_j u_j\right\| \leq C \left(\sum_{j=1}^n |\lambda_j|^p\right)^{1/p}
		\end{equation}
	for every choice of scalars $\lambda_1,\ldots,\lambda_n$.
	If
		\begin{equation}\label{eq:obstruction-condition}
			K+\frac{C}{b}M_p(r)<2,
		\end{equation}
	then \(X\) does not have the super ADP.
	In particular, if $0<r\leq 4/3$ and $K+\frac{C}{2^{1/p}a}<2$, then $X$ does not have the super ADP.
\end{lemma}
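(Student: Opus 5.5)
The plan is to exhibit one point $x\in S_X$ and one relatively weakly open set $W\subset B_X$ with $W\cap S_X\neq\varnothing$ such that $\sup_{z\in W}\max_{\theta\in\mathbb T}\|x+\theta z\|<2$. This contradicts the super ADP.

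\emph{Choice of $x$ and $W$.} Take $x:=x_0/a\in S_X$. Let $N$ exceed every index in $\bigcup_j\supp(u_j)$. For a small $\delta>0$ to be fixed later, set
\[
W:=\{z\in B_X\colon \|P_Nz-y_0/b\|<\delta\}.
\]
Since $P_N$ has finite rank, $W$ is relatively weakly open in $B_X$. Moreover $y_0/b\in W\cap S_X$, because $P_N(y_0/b)=y_0/b$.

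\emph{Reduction for a given $z\in W$.} Fix $z\in W$ and $\theta\in\mathbb T$. Put
\[
z':=y_0/b+(I-P_N)z .
\]
Then $\|z-z'\|<\delta$ and $\|z'\|<1+\delta$, so it suffices to bound $\|x+\theta z'\|$ up to an error of $\delta$. Using $1/a=r/b$, we can write
\[
x+\theta z'=\theta\Big[\tfrac{1}{b\,n^{1/p}}\sum_{j=1}^n \rho_j\,\varepsilon_j u_j+(I-P_N)z\Big],\qquad \rho_j:=1+\bar\theta\varepsilon_j r .
\]
Note that $|\rho_j|=|r+\theta\varepsilon_j|$, since $\varepsilon_j$ is real and $|\theta|=1$.

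\emph{Splitting the coefficients.} Decompose each $\rho_j=\sigma_j+\tau_j$ with
\[
\sigma_j:=\frac{\rho_j}{|\rho_j|}\min\{|\rho_j|,1\},\qquad |\sigma_j|\le1,\qquad |\tau_j|=(|\rho_j|-1)_+=(|r+\theta\varepsilon_j|-1)_+ .
\]
This splits $x+\theta z'$ into two pieces.

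\emph{The $\sigma$-part.} This is $\frac{1}{bn^{1/p}}\sum_j\sigma_j\varepsilon_ju_j+(I-P_N)z$. The $u_j$ are disjoint blocks supported in $\{1,\dots,N\}$, and on each block it is a scalar multiple, of modulus at most $1$, of the corresponding block of $z'$. So coordinatewise its coefficient moduli are dominated by those of $z'$. By $K$-unconditionality (applied to finite truncations and passing to the limit), its norm is at most $K\|z'\|<K(1+\delta)$.

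\emph{The $\tau$-part.} This is $\frac{1}{bn^{1/p}}\sum_j\tau_j\varepsilon_ju_j$. By \eqref{eq:equivalent}, and since exactly half of the $\varepsilon_j$ equal $1$ (because $\sum_j\varepsilon_j=0$), its norm is at most
\[
\frac{C}{b}\Big[\tfrac12(|r+\theta|-1)_+^p+\tfrac12(|r-\theta|-1)_+^p\Big]^{1/p}\le \frac{C}{b}M_p(r).
\]

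\emph{Conclusion of the main claim.} Combining the two pieces gives
\[
\|x+\theta z\|\le K(1+\delta)+\frac{C}{b}M_p(r)+\delta
\]
uniformly in $z\in W$ and $\theta\in\mathbb T$. By \eqref{eq:obstruction-condition}, choosing $\delta$ small enough makes this bound strictly less than $2$, so the super ADP fails.

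\emph{The particular case.} If $0<r\le4/3$, Lemma~\ref{lem:technical} gives $M_p(r)\le 2^{-1/p}r=2^{-1/p}b/a$. Hence $\frac{C}{b}M_p(r)\le \frac{C}{2^{1/p}a}$, and condition \eqref{eq:obstruction-condition} follows from $K+\frac{C}{2^{1/p}a}<2$.

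The main obstacle is the $\sigma$-part. To apply $K$-unconditionality exactly, the coefficients must be compared with an exact vector rather than the approximate $P_Nz$; this is why we pass to $z'$ and absorb the cost in $\delta$. We also use that unconditionality with block-scalar multipliers respects coordinatewise modulus domination, including in the complex case.
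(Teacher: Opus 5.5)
Your proof is correct and follows the paper's argument essentially step for step: the same point $x=x_0/a$, a weakly open set that pins the head of $z$ near $y_0/b$, the same truncation of the coefficients $r+\theta\varepsilon_j$ at modulus one, and the same two estimates (via $K$-unconditionality and via the upper $\ell_p$-estimate). The only differences are cosmetic: you use the initial-segment projection $P_N$ instead of the coordinate projection $P_A$, and you factor out $\theta$. You should also state the trivial convention $\sigma_j=\tau_j=0$ when $\rho_j=0$, which can occur when $r=1$.
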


\begin{proof}
	Since the vectors $u_1,\ldots,u_n$ have pairwise disjoint supports and are nonzero, we have $x_0\neq 0$ and $y_0\neq 0$.
	Thus, $a>0$ and $b>0$. 
	Define
		$$
		x:=\frac{x_0}{a}\in S_X \qquad \text{and} \qquad \widetilde{y}_0:=\frac{y_0}{b}\in S_X.
		$$
	Also, let
		$$
		A:=\bigcup_{j=1}^n\operatorname{supp}(u_j).
		$$
	Since the vectors $u_1,\ldots,u_n$ are block vectors, the set $A$ is finite.
	
	By \eqref{eq:obstruction-condition}, we may choose $\delta>0$ sufficiently small so that
		$$
		K(1+\delta)+\frac{C}{b}M_p(r)+\delta<2.
		$$
	Consider the set
		$$
		W:= \left\{ y\in B_X \colon \left\|P_Ay-\widetilde{y}_0\right\|<\delta \right\}.
		$$
	Since $P_A$ has finite rank, it is weak-to-norm continuous. 
	Hence, $W$ is a relatively weakly open subset of $B_X$. 
	Moreover, $P_A\widetilde{y}_0=\widetilde{y}_0$, and therefore $\widetilde{y}_0\in W\cap S_X$.
	In particular, $W\cap S_X\neq\varnothing$.
	
	Fix $y\in W$, and define $h:=P_Ay-\widetilde{y}_0$ and $w:=(I-P_A)y$.
	Then, $\|h\|<\delta$, and $y=\widetilde{y}_0+h+w$.
	Moreover, $\widetilde{y}_0+w=y-h$, and hence
		\begin{equation}\label{eq:y_tilde_w}
			\|\widetilde{y}_0+w\| \leq \|y\|+\|h\| < 1+\delta.
		\end{equation}
	
	Fix now $\theta\in\mathbb{T}$. Since $r=b/a$, we have
		$$
		x+\theta\widetilde{y}_0 = \frac{1}{an^{1/p}}\sum_{j=1}^n u_j + \frac{\theta}{bn^{1/p}}\sum_{j=1}^n\varepsilon_ju_j = \frac{1}{bn^{1/p}} \sum_{j=1}^n \left(r+\theta\varepsilon_j\right)u_j.
		$$
	For every $j\in\{1,\ldots,n\}$, set $\alpha_j:=r+\theta\varepsilon_j$.
	We decompose $\frac{\alpha_j}{bn^{1/p}} = \beta_j+c_j$ as follows.
	 If $\alpha_j\neq0$, define
		$$
		\beta_j := \frac{\alpha_j}{|\alpha_j|} \frac{\min\{|\alpha_j|,1\}}{bn^{1/p}} \qquad \text{and} \qquad c_j := \frac{\alpha_j}{bn^{1/p}}-\beta_j.
		$$
	If $\alpha_j=0$, put $\beta_j=c_j=0$.
	Then,
		\begin{equation}\label{eq:beta_j-c_j}
			|\beta_j| \leq \frac{1}{bn^{1/p}} \qquad \text{and} \qquad |c_j| = \frac{\bigl(|r+\theta\varepsilon_j|-1\bigr)_+}{bn^{1/p}}.
		\end{equation}
	Consequently,
		$$
		x+\theta\widetilde{y}_0 = \sum_{j=1}^n\beta_ju_j + \sum_{j=1}^n c_ju_j,
		$$
	and therefore
		\begin{equation}\label{eq:estimate}
			x+\theta(\widetilde{y}_0+w) = \left(\sum_{j=1}^n \beta_ju_j+\theta w\right) + \sum_{j=1}^n c_ju_j.
		\end{equation}
	
	We first estimate the first term of \eqref{eq:estimate}. Recall that
		$$
		\widetilde{y}_0+w =\frac{1}{bn^{1/p}} \sum_{j=1}^n\varepsilon_ju_j+w.
		$$
	On each support $\operatorname{supp}(u_j)$, by \eqref{eq:beta_j-c_j} the coordinate coefficients of $\beta_ju_j$ are dominated in modulus by the corresponding coordinate coefficients of $\frac{\varepsilon_j}{bn^{1/p}}u_j$.
	Outside $A$, the vectors $\theta w$ and $w$ have coordinate coefficients with the same moduli. Hence, by the $K$-unconditionality of the basis,
		$$
		\left\| \sum_{j=1}^n\beta_ju_j+\theta w \right\| \leq K \left\| \widetilde{y}_0+w \right\|.
		$$
	Indeed, if $w$ is not finitely supported, this inequality follows by applying the definition of $K$-unconditionality to the corresponding finite partial sums and then passing to the norm limit. 
	Thus, by \eqref{eq:y_tilde_w},
		\begin{equation}\label{eq:first_estimate}
			\left\|	\sum_{j=1}^n\beta_ju_j+\theta w \right\| < K(1+\delta).
		\end{equation}
	
	We next estimate the second term of \eqref{eq:estimate}. 
	By \eqref{eq:equivalent} and \eqref{eq:beta_j-c_j}, we have
		$$
		\left\| \sum_{j=1}^n c_ju_j \right\| \leq C\left( \sum_{j=1}^n|c_j|^p \right)^{1/p} = \frac{C}{bn^{1/p}} \left[\sum_{j=1}^n \left(|r+\theta\varepsilon_j|-1\right)_+^p\right]^{1/p}.
		$$
	By the choice of the $\varepsilon_j$'s, exactly $n/2$ of the signs $\varepsilon_j$ are equal to $1$, and exactly $n/2$ are equal to $-1$. 
	Therefore, by the definition of $M_p(r)$, it follows that 
		\begin{equation}\label{eq:second_estimate}
			\left\| \sum_{j=1}^n c_ju_j \right\| \leq \frac{C}{b} \left[ \frac{1}{2}\left(|r+\theta|-1\right)_+^p + \frac{1}{2}\left(|r-\theta|-1\right)_+^p \right]^{1/p} \leq \frac{C}{b} M_p(r).
		\end{equation}
	
	Combining \eqref{eq:first_estimate} and \eqref{eq:second_estimate}, we obtain
		\begin{equation}\label{eq:combined_estimates}
			\left\| x+\theta(\widetilde{y}_0+w) \right\| < K(1+\delta) + \frac{C}{b}M_p(r).
		\end{equation}
	Finally, since $y=\widetilde{y}_0+h+w$, we have $x+\theta y = x+\theta(\widetilde{y}_0+w)+\theta h$.
	Therefore, by \eqref{eq:combined_estimates} and the fact that $\|h\|<\delta$,
		\begin{equation}\label{eq:final_estimate}
			\|x+\theta y\| \leq \left\|x+\theta(\widetilde{y}_0+w)\right\| + \|h\| < K(1+\delta) + \frac{C}{b}M_p(r) + \delta <2.
		\end{equation}
	Set
		$$
		\rho := K(1+\delta) + \frac{C}{b}M_p(r) + \delta.
		$$
	Then $\rho<2$, and \eqref{eq:final_estimate} shows that $\|x+\theta y\|<\rho$ for every $y\in W$ and $\theta\in\mathbb{T}$. Hence,
		$$
		\sup_{y\in W} \max_{\theta\in\mathbb{T}} \|x+\theta y\| \leq \rho < 2.
		$$
	Since $x\in S_X$ and $W$ is a relatively weakly open subset of $B_X$ satisfying $W\cap S_X\neq\varnothing$, this contradicts the super ADP.
	
	For the additional part, by Lemma~\ref{lem:technical}, $M_p(r)\leq 2^{-1/p}r$.
	Since $r=b/a$, it follows that
		$$
		\frac{C}{b}M_p(r) \leq \frac{C}{b}2^{-1/p}r = \frac{C}{b}2^{-1/p}\frac{b}{a} = \frac{C}{2^{1/p}a}.
		$$
	Therefore,
		$$
		K+\frac{C}{b}M_p(r) \leq K+\frac{C}{2^{1/p}a} < 2.
		$$
	The first part of the lemma now implies that $X$ does not have the super ADP.
\end{proof}

We are now ready to prove Theorem~\ref{thm:intro-main}.

\medskip

%

\textit{Proof of Theorem~\ref{thm:intro-main}.}
	Assume, by means of contradiction, that $X$ has a $K$-unconditional basis and the super ADP for some $1\leq K<3/2$.
	Fix an even $n\in\mathbb{N}$. 
	Since $K<3/2$, we may choose
		$$
		0<\varepsilon< \min\left\{ \frac{1}{n},\ \frac{1}{4},\ \frac{3-2K}{2(2-K)}\right\}.
		$$
	By Proposition~\ref{prop:super-adp-l1-blocks}, there exist successive normalized block vectors $u_1,\ldots,u_n\in S_X$ such that
		$$
		\left\|\sum_{j=1}^n u_j\right\| > (1-\varepsilon)n
		$$
	and
			\begin{equation}\label{eq:p-1_C-1}
			\left\|\sum_{j=1}^n\lambda_j u_j\right\| \leq \sum_{j=1}^n|\lambda_j|
		\end{equation}
	for every choice of scalars $\lambda_1,\ldots,\lambda_n$.
	
	Choose $\varepsilon_1,\ldots,\varepsilon_n\in\{-1,1\}$ such that
		$$
		\sum_{j=1}^n\varepsilon_j=0,
		$$
	and define, with $p=1$,
		$$
		x_0:=\frac{1}{n}\sum_{j=1}^n u_j \qquad \text{and} \qquad y_0:=\frac{1}{n}\sum_{j=1}^n\varepsilon_j u_j.
		$$
	Set
		$$
		a:=\|x_0\|, \qquad b:=\|y_0\|, \qquad \text{and} \qquad r:=\frac{b}{a}.
		$$
	The choice of the vectors $u_1,\ldots,u_n$ gives
		$$
		a = \frac{1}{n} \left\|\sum_{j=1}^n u_j\right\| > 1-\varepsilon.
		$$
	Moreover, since the vectors $u_j$ are normalized,
		$$
		b = \left\|\frac{1}{n} \sum_{j=1}^n\varepsilon_j u_j\right\| \leq \frac{1}{n} \sum_{j=1}^n\|u_j\| = 1.
		$$
	Consequently,
		$$
		0 < r = \frac{b}{a} < \frac{1}{1-\varepsilon} < \frac{4}{3},
		$$
	where the last inequality follows from $\varepsilon<1/4$.
	
	The estimate \eqref{eq:p-1_C-1} shows that the hypothesis of Lemma~\ref{lem:local-obstruction-finite-p} is satisfied with $p=1$ and $C=1$.
	Furthermore,
		$$
		K+\frac{1}{2a} < K+\frac{1}{2(1-\varepsilon)}.
		$$
	Since
		$$
		\varepsilon < \frac{3-2K}{2(2-K)} = 1-\frac{1}{2(2-K)},
		$$
	it is easy to show that
		$$
		K+\frac{1}{2a} < 2.
		$$
	The additional part of Lemma~\ref{lem:local-obstruction-finite-p} now implies that $X$ does not have the super ADP, contradicting our initial assumption.
\qed

\medskip

This settles the $1$-unconditional part of \cite[Question~6.4]{Langemets2025} and, more generally, its $K$-unconditional case for $1\leq K<3/2$.



\section{Countable weak $\pi$-bases}\label{sec:pibase}

We now turn to the weak topological structure of bounded convex subsets of Banach spaces with bases.
We begin by providing a positive answer to \cite[Question~5.1]{LooPerreau2026}.

\begin{theorem}\label{thm:pi-base-ball}
	Let $X$ be a Banach space with a $1$-unconditional Schauder basis. Then, $(B_X,w)$ has a countable $\pi$-base.
\end{theorem}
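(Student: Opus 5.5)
We plan to produce an explicit countable family and show that it is a $\pi$-base. For $N\in\mathbb N$, for a vector $z=\sum_{j\le N}a_je_j$ with rational (or Gaussian-rational) coefficients and $\|z\|<1$, and for rational $\delta>0$, put
\[
U(N,z,\delta):=\{y\in B_X\colon \|P_Ny-z\|<\delta\}.
\]
Each such set is relatively weakly open, because $P_N$ has finite rank, and it is nonempty since it contains $z$. There are countably many of them. The whole task is therefore to show that every nonempty relatively weakly open $W\subset B_X$ contains some $U(N,z,\delta)$. We may allow $N$ to be enlarged freely, since $U(M,P_Mz',\delta)\subset U(N,z,\delta)$ whenever $M\ge N$ and $P_Nz'=z$.

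\emph{Step 1 (reduction to a good centre).} Let $W$ be a nonempty relatively weakly open subset of $B_X$, and shrink it to a basic set $\{y\in B_X\colon |f_i(y-y_0)|<\varepsilon,\ i=1,\dots,k\}$. Since $ty_0\to y_0$ in norm as $t\to1^-$, and since finitely supported vectors are norm dense, we may assume that $y_0=z$ is finitely supported, that $\|z\|<1$, and that $z$ has rational coefficients, at the cost of replacing $\varepsilon$ by $\varepsilon/2$. Fix $N_0\ge\max\supp z$. For $y\in B_X$ with $P_Ny$ close to $z$, the head contribution $f_i(P_Ny-z)$ is small. The only difficulty is therefore the tail term $f_i((I-P_N)y)$, which need not be small: the basis is not assumed shrinking, as the $\ell_1$ example with $f=(1,1,\dots)$ shows.

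\emph{Step 2 (killing the tails by pushing the head outwards; the main obstacle).} Here we use $1$-unconditionality: the set $L(u):=\{t\in\overline{\operatorname{span}}\{e_j\colon j>N\}\colon \|u+t\|\le1\}$ of admissible tails for a head $u$ is convex and invariant under coordinatewise sign changes. Moreover $L(\lambda u)$ decreases as $\lambda\uparrow 1/\|u\|$, because $\|P_Nx\|\le\|x\|$ and the norm is monotone in the moduli of the coordinates.

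We would not centre the member of the $\pi$-base at $z$ itself. Instead we would choose a new finitely supported head $z'$, still inside $W$ and agreeing with $z$ up to $N_0$, which absorbs as much of each $f_i$ as possible. Concretely, we run a dichotomy for each $i$. Either $\sup_{t\in L}|f_i(t)|$ becomes small for tails beyond some $M$, which is the shrinking-like case, and then we simply enlarge $N$. Or there exist normalized blocks beyond $M$ on which $f_i$ is large, and then we append a small multiple of such a block, with the sign chosen so that $f_i$ keeps its value near $f_i(z)$, and rescale towards the sphere. After finitely many such moves, the admissible tail set for the new head $z'$ should have $f_i$-oscillation below $\varepsilon/3$, by the monotonicity of $L(\cdot)$ together with sign-symmetry. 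Sign-symmetry lets us average $f_i(t)$ and $f_i(\varepsilon t)$, so that only the $|f_i|$-mass of admissible tails matters.

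The delicate point, and the one we expect to be the real obstacle, is to make this pushing terminate while staying inside $W$. We would first try to control it by a convexity/extremality argument: take $\lambda$ close to the largest scalar with $\lambda z'\in\overline W$.

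\emph{Step 3 (conclusion).} With $z'$ and $M$ fixed as above, choose $\delta$ so small that the head error $\sum_i\|f_i\|\delta$ is below $\varepsilon/3$. Then every $y\in U(M,z',\delta)$ satisfies $|f_i(y-z)|<\varepsilon$ for all $i$, that is, $U(M,z',\delta)\subset W$. Perturbing $z'$ to rational coefficients preserves this, which gives the countable $\pi$-base.
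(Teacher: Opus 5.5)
Your reduction is the same as the paper's. The sets $U(N,z,\delta)$ form a countable base for the coordinatewise topology $\tau_c$ on $B_X$, and everything hinges on showing that each nonempty relatively weakly open $W\subset B_X$ contains one of them. But Step~2, which carries all of the content, is a heuristic rather than an argument, as you acknowledge. Three things are missing.

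First, no quantity decreases by a definite amount per move, so ``after finitely many such moves'' is unsupported.

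Second, choosing one sign per appended block controls the cumulative drift of one functional, not of $f_1,\dots,f_k$ together. That would require pairs of far-out blocks with almost equal vectors $(f_1(b),\dots,f_k(b))$, appended with opposite signs.

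Third, your sets $L(u)$ concern exact heads, whereas Step~3 needs control of $f_i((I-P_M)y)$ uniformly over all $y$ with $\|P_My-z'\|<\delta$. At the sphere, which is where your pushing drives the head, $u\mapsto L(u)$ can jump. For the real $1$-unconditional renorming of $\ell_1$ with unit ball $\overline{\operatorname{conv}}\{\pm(1-\frac1k)e_1\pm e_n\colon 1\le k\le n\}$, one has $L(e_1)=\{0\}$, yet $e_n\in L((1-\delta)e_1)$ for every $\delta\in(0,1)$ and all large $n$.

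Your suggested remedy, taking $\lambda$ nearly maximal with $\lambda z'\in\overline W$, optimizes the wrong quantity: the size of the head does not restrict the $f$-mass of its tails. In $\ell_1\oplus_\infty\ell_1$ with the interleaved $1$-unconditional basis and $f$ the sum of the even-indexed coordinates, the head $e_1$ already has norm $1$. Yet for every $N$, its admissible tails beyond $N$ give $f$ every value in $[-1,1]$.

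The missing idea is to make the head extremal for $\operatorname{Re} f$ itself, locally in $\tau_c$. In Lemma~\ref{lem:stabilization} the paper sets $\Lambda:=\inf_U\sup_{z\in U}\operatorname{Re} f(z)$ over $\tau_c$-neighbourhoods $U$ of the current centre $x$ (called $L$ there). It picks finitely supported near-maximizers $z_1,z_2$ that are coordinatewise close to $x$ and have almost equal values under every $g_i$ (by compactness in $\mathbb K^m$), with $\operatorname{supp}(z_1)\subset\{1,\dots,N_1\}$ and $\operatorname{supp}(z_2)\subset\{1,\dots,N_2\}$. It then puts $y:=\frac12z_1+\frac12(2P_{N_1}-I)z_2$, so that $g_i(y-x)=\frac12g_i(z_1-z_2)+g_i(P_{N_1}(z_2-x))$ is small for every $i$.

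Now take any $w\in B_X$ with $\|P_{N_2}(w-y)\|<\delta$. The unimodular multiplier that changes the signs of the coordinates in $(N_1,N_2]$ and rotates the tail $w_3:=(I-P_{N_2})w$ by a suitable phase sends $w$ to a point of the original neighbourhood. At that point $\operatorname{Re} f$ exceeds $\Lambda-\alpha-\delta+|f(w_3)|$, whereas $\operatorname{Re} f<\Lambda+\alpha$ on that neighbourhood by the choice of $\Lambda$. Hence $|f(w_3)|<2\alpha+\delta$ on the whole new neighbourhood.

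This single step gives exactly the tail control your Step~2 aims at, with no iteration and uniformly under perturbations of the head. A finite induction over $g_1,\dots,g_m$ with nested $\tau_c$-neighbourhoods then finishes the proof. Your sign-symmetry intuition points in this direction, but without an extremal quantity such as $\Lambda$ the argument does not close.
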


To prove Theorem~\ref{thm:pi-base-ball}, it is enough to show that every nonempty relatively weakly open subset of $B_X$ contains a nonempty open subset for the topology of coordinatewise convergence. 
For completeness, we recall the definition of this topology below.

Let $X$ be a Banach space with a Schauder basis $(e_j)_{j=1}^{\infty}$, and let $(e_j^*)_{j=1}^{\infty}$ be the associated biorthogonal coordinate functionals. 
Consider the map $\Phi:B_X\longrightarrow \mathbb K^{\mathbb N}$ defined by 
	$$
	\Phi(x):=(e_n^*(x))_{n=1}^{\infty}.
	$$
We denote by $\tau_c$ the topology induced on $B_X$ by the product topology of $\mathbb K^{\mathbb N}$ through $\Phi$. 
A convenient neighborhood basis for $\tau_c$ is given by the sets
	$$
	U(x,N,r) := \left\{ z\in B_X \colon \|P_N(z-x)\|<r\right\},
	$$
where $x\in B_X$, $N\in\mathbb N$ and $r>0$.
Since $P_N$ has finite rank, it is weak-to-norm continuous. 
Hence, every set $U(x,N,r)$ is relatively weakly open in $B_X$, and therefore $\tau_c\subset w|_{B_X}$.
Moreover, since $\mathbb K^{\mathbb N}$ is second countable in its product topology, the topology $\tau_c$ induced on $B_X$ through $\Phi$ is also second countable.
Consequently, if every nonempty relatively weakly open subset of $B_X$ contains a nonempty $\tau_c$-open subset, then any countable base for $\tau_c$ is a countable $\pi$-base for $(B_X,w)$.

Before proving Theorem~\ref{thm:pi-base-ball}, we establish a stabilization lemma that contains the main part of the argument. 
Its proof combines a gliding-hump construction with coordinatewise unimodular multipliers.

\begin{lemma}\label{lem:stabilization}
	Let $X$ be a Banach space with a $1$-unconditional basis $(e_j)_{j=1}^{\infty}$, let $x\in B_X\cap c_{00}$, let $O$ be a $\tau_c$-neighborhood of $x$, and let $g_1,\ldots,g_m\in B_{X^*}$.
	Fix $n\in\{1,\ldots,m\}$ and let $a,b>0$. 
	Then, there exist $y\in B_X\cap c_{00}$ and a $\tau_c$-neighborhood $V$ of $y$ such that $V\subset O$, $|g_i(y-x)|<a$ for every $i\in\{1,\ldots,m\}$, and $\operatorname{diam}g_n(V)<b$, where $\operatorname{diam}g_n(V) := \sup_{u,v\in V} |g_n(u)-g_n(v)|$.
\end{lemma}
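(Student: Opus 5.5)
The plan is to fix a basic neighbourhood $U(x,N,r)\subset O$, locate a point at which $\operatorname{Re} g_n$ is almost maximal, and use the sign-reflection multiplier $S_\varepsilon$ to get a two-sided bound on $g_n$ near that point. I have not resolved how to keep the constraint $|g_i(y-x)|<a$ for all $i$ under this procedure; that step, described at the end, is the main obstacle. Since $x\in c_{00}$, we may take $N\ge\max\supp(x)$. Throughout I work with $\operatorname{Re} g_n$ and treat the imaginary part at the end.

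\emph{Key mechanism.} Let $y\in B_X\cap c_{00}$ with $\supp(y)\subset\{1,\dots,M\}$, and let $V=U(y,M,s)$. For $v\in V$, set $v':=S_\varepsilon v$, where $\varepsilon_j=1$ for $j\le M$ and $\varepsilon_j=-1$ for $j>M$. By $1$-unconditionality $v'\in B_X$, and since $P_Mv'=P_Mv$ we also have $v'\in V$. Moreover
\[
g_n(v)+g_n(v')=2g_n(P_Mv),\qquad |g_n(P_Mv)-g_n(y)|\le\|P_M(v-y)\|<s .
\]
Suppose we also have an upper bound $\operatorname{Re} g_n\le \operatorname{Re} g_n(y)+\eta$ on $V$. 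Applying it to $v'$ gives
\[
\operatorname{Re} g_n(v)\ge 2\operatorname{Re} g_n(y)-2s-\operatorname{Re} g_n(y)-\eta ,
\]
so $\operatorname{Re} g_n(V)$ lies in an interval of length at most $2\eta+2s$. Hence everything reduces to choosing $y$ as a near-maximiser of $\operatorname{Re} g_n$ on a set that contains $V$.

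\emph{Choice of $y$.} Define
\[
\sigma:=\sup\bigl\{\operatorname{Re} g_n(z):\ z\in U(x,N,r/2),\ |g_i(z-x)|<a/2\ \ (1\le i\le m)\bigr\}.
\]
This set contains $x$, so $\sigma\le 1$ is finite. Pick an admissible $z$ with $\operatorname{Re} g_n(z)>\sigma-\eta$, and put $y:=P_Mz$ with $M\ge N$ so large that $\|z-y\|<\eta$. Then $y\in B_X\cap c_{00}$, $y\in U(x,N,r/2)$, $|g_i(y-x)|<a/2+\eta<a$, and $V=U(y,M,s)\subset U(x,N,r)\subset O$ for $s$ small.

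\emph{The obstacle.} Applying the key mechanism with bound $\sigma$ requires every $v\in V$ to be admissible, and in particular to satisfy $|g_i(v-x)|<a/2$. This can fail, because a tail $(I-P_M)v$ can move the $g_i$ arbitrarily within $B_X$; $\tau_c$-neighbourhoods do not control the $g_i$. I would first try to remove the $g_i$ constraint from $\sigma$ entirely, taking the supremum over $U(x,N,r/2)$ alone. Then $y$ must be reached by a convex combination $y=(1-t)x+tP_Mz$, where $t$ is small enough to keep $|g_i(y-x)|<a$. The reflection argument is then rerun around $y$, and this reduces the oscillation of $g_n$ by a definite fraction.

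If that fails, I would fall back on an iterative gliding-hump scheme. If the oscillation stays at least $b$, the reflection trick yields a tail raising $\operatorname{Re} g_n$ by $b/4$. Successive tails are supported on disjoint blocks, so by $1$-unconditionality they can be combined with unimodular signs; averaging them with small weights $a/(2k)$ would keep the $g_i$ drift below $a$. Since $\operatorname{Re} g_n\le1$, the construction must stop after finitely many steps.

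\emph{Imaginary part.} Finally, I would repeat the whole construction for $\operatorname{Re}(ig_n)$ inside the $V$ just obtained, with $a$ halved. Any sub-neighbourhood keeps the first estimate, and together the two estimates give $\operatorname{diam} g_n(V)<b$.
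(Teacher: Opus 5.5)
Your reflection step is correct; it is essentially the final step of the paper's proof. However, the proposal has a genuine gap exactly where you locate it, and neither suggested repair closes it.

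The mechanism needs an upper bound for $\operatorname{Re} g_n$ on all of $V$, i.e.\ near-maximality of the centre over a $\tau_c$-neighbourhood, on which the $g_i$ are not controlled. The conclusion needs $|g_i(y-x)|<a$. As long as $y$ itself must be the near-maximiser, these requirements compete: moving from $x$ towards a near-maximiser changes the $g_i$ by an amount comparable to the gain in $\operatorname{Re} g_n$.

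Your first fix shows this. For $y=(1-t)x+tP_Mz$, with $\sigma'$ the unconstrained supremum, the quantity your mechanism converts into an oscillation bound is about $(1-t)\bigl(\sigma'-\operatorname{Re} g_n(x)\bigr)$, while $|g_i(y-x)|$ may be as large as $2t$. Pushing the gap below $b/4$ by iteration needs $\sum t$ of order $1$ (about $\log(8/b)$), so your only control on the accumulated drift is not below $a$.

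The gliding-hump fallback fails the same way. With convex weight $c$, a step raises $\operatorname{Re} g_n$ by about $cb/4$ and may move each $g_i$ by $2c$. The bound $|\operatorname{Re} g_n|\le 1$ caps the number of steps near $8/(cb)$, but the possible total drift is then of order $1/b$, independent of $c$. With summable weights, termination is no longer forced at all. Signs on disjoint blocks would help only if they made the tail contributions to the $g_i$ cancel, and nothing in the proposal produces that cancellation.

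The missing idea is to let an isometric image of $y$, not $y$ itself, be the near-maximiser. The paper does this as follows.
\begin{enumerate}
\item It sets $L:=\inf_U\sup_{z\in U}\operatorname{Re} g_n(z)$ over $\tau_c$-neighbourhoods $U$ of $x$, and fixes $U_0=U(x,N_0,r)\subset O$ with $\sup_{U_0}\operatorname{Re} g_n<L+\alpha$.
\item It picks finitely supported $z_k\to x$ in $\tau_c$ with $\operatorname{Re} g_n(z_k)\to L$. By compactness in $\mathbb K^m$, it may assume $(g_1(z_k),\dots,g_m(z_k))$ converges.
\item It takes $z_1$ supported in $\{1,\dots,N_1\}$, then a later $z_2$ supported in $\{1,\dots,N_2\}$ with $\|P_{N_1}(z_2-x)\|$ small, and sets $y:=\frac12 z_1+\frac12(2P_{N_1}-I)z_2$.
\end{enumerate}
Then $g_i(y-x)=\frac12 g_i(z_1-z_2)+g_i(P_{N_1}(z_2-x))$ is small for every $i$: flipping the tail of $z_2$ cancels the drift. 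The multiplier negating the coordinates in $\{N_1+1,\dots,N_2\}$ fixes the first $N_0$ coordinates, so it maps $U_0$ onto itself. It also sends $y$ to $\frac12(z_1+z_2)$, where $\operatorname{Re} g_n>L-\alpha$.

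Your reflection argument is then run on the images $\widetilde w\in U_0$ of the points $w\in V=U(y,N_2,\delta)$. A unimodular scalar on the coordinates beyond $N_2$ makes the tail contribute $|g_n((I-P_{N_2})w)|$. This bounds the modulus directly, so your separate pass for the imaginary part is unnecessary.
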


\begin{proof}
	Set $f:=g_n$ and define
		$$
		L := \inf \left\{ \sup_{z\in U}\operatorname{Re}f(z) \colon U\text{ is a }\tau_c\text{-neighborhood of }x \right\}.
		$$
	Since $x$ belongs to every $\tau_c$-neighborhood of itself, we have $L\geq \operatorname{Re}f(x)$.
	
	Choose 
		$$
		0<\alpha<\frac{b}{8}.
		$$
	By the definition of $L$, there exists a $\tau_c$-neighborhood $U'$ of $x$ such that $\sup_{z\in U'}\operatorname{Re}f(z)<L+\alpha$.
	Since $O\cap U'$ is a $\tau_c$-neighborhood of $x$, we may choose $N_0\in\mathbb N$ and $r>0$ such that $\operatorname{supp}(x)\subset\{1,\ldots,N_0\}$ and 
		$$
		U_0 := U(x,N_0,r) \subset O\cap U'.
		$$
	Therefore, 
		\begin{equation}\label{eq:sup-alpha}
			\sup_{z\in U_0}\operatorname{Re}f(z)<L+\alpha.
		\end{equation}
	
	Choose
		$$
		0 < \gamma < \min \left\{\frac{a}{2},\ r\right\}.
		$$
	Since $\tau_c$ is second countable, we may fix a decreasing countable local base $(W_k)_{k=1}^{\infty}$ at $x$ for $\tau_c$ such that $W_k\subset U_0$ for every $k\in\mathbb N$. 
	As $(W_k)_{k=1}^{\infty}$ is a local base at $x$, the definition of $L$ yields
		$$
		\inf_{k\in\mathbb N}\sup_{z\in W_k}\operatorname{Re}f(z)=L.
		$$
	For every $k\in\mathbb N$, choose $\widehat z_k\in W_k$ such that $\operatorname{Re}f(\widehat z_k)>L-\frac{1}{2k}$.
	Since $P_N \widehat z_k\rightarrow \widehat z_k$ in norm as $N\to \infty$, we may choose $M_k\in\mathbb N$ sufficiently large so that $P_{M_k} \widehat z_k\in W_k$ and $\left|f(P_{M_k} \widehat z_k- \widehat z_k)\right| < \frac{1}{2k}$.
	Notice that $P_{M_k}\widehat z_k\in B_X$, since the canonical projections are contractive. 
	Define $z_k:=P_{M_k}\widehat z_k$.
	Then, $z_k\in W_k\cap c_{00}$ and $\operatorname{Re}f(z_k)>L-\frac{1}{k}$.
	Since $z_k\in W_k$ and $(W_k)_{k=1}^{\infty}$ is a decreasing local base at $x$, we have $z_k\to x$ in the topology $\tau_c$, that is, coordinatewise.
	Moreover, the sequence $\left(g_1(z_k),\ldots,g_m(z_k)\right)_{k=1}^{\infty}$ is bounded in the finite-dimensional space $\mathbb K^m$. 
	Passing to a subsequence, if necessary, we may assume that $\left(g_1(z_k),\ldots,g_m(z_k)\right) \to (q_1,\ldots,q_m)$ for some $(q_1,\ldots,q_m)\in\mathbb K^m$.
	
	Fix a sufficiently far term of this subsequence, which we will call $z_1$, such that 
		\begin{equation}\label{eq:P_N_0-z_1}
			\|P_{N_0}(z_1-x)\|<\gamma,
		\end{equation}
		\begin{equation}\label{eq:q_i-z_1}
			\max_{1\leq i\leq m} |g_i(z_1)-q_i| < \gamma,
		\end{equation}
	and 
		\begin{equation}\label{eq:Re_f_z-1_alpha}
			\operatorname{Re}f(z_1)>L-\alpha
		\end{equation}
	Now fix $N_1\geq N_0$ such that $\operatorname{supp}(z_1)\subset\{1,\ldots,N_1\}$.
	Also, choose a later term of the subsequence, which we denote by $z_2$, such that
		\begin{equation}\label{eq:P_N_1-z_2}
			\|P_{N_1}(z_2-x)\|<\gamma
		\end{equation}
		\begin{equation}\label{eq:q_i-z_2}
			\max_{1\leq i\leq m} |g_i(z_2)-q_i| <\gamma
		\end{equation}
	and 
		\begin{equation}\label{eq:Re_f_z-2_alpha}
			\operatorname{Re}f(z_2)>L-\alpha
		\end{equation}
	And fix $N_2>N_1$ such that $\operatorname{supp}(z_2)\subset \{1,\ldots,N_2\}$.
	Define 
		$$
		R_{N_1}:=2P_{N_1}-I.
		$$
	The operator $R_{N_1}$ fixes the first $N_1$ coordinates and changes the signs of all the remaining coordinates. 
	Hence, $R_{N_1}$ is an isometry in both the real and complex cases.
	
	Set
		\begin{equation}\label{eq:y_z-1_z-2}
			y := \frac{1}{2}z_1 + \frac{1}{2}R_{N_1}z_2.
		\end{equation}
	Since $z_1,R_{N_1}z_2\in B_X$, we have $y\in B_X$.
	Moreover, $y\in c_{00}$.
	
	For every $i\in\{1,\ldots,m\}$, we have
		$$
		g_i(y-x) = \frac{1}{2} g_i(z_1) + \frac{1}{2}g_i(R_{N_1}z_2) - g_i(x) = \frac{1}{2}g_i(z_1-z_2) + g_i(P_{N_1}z_2-x).
		$$
	By \eqref{eq:q_i-z_1} and \eqref{eq:q_i-z_2},
		$$
		|g_i(z_1-z_2)|<2\gamma.
		$$
	Since $x=P_{N_1}x$, it follows from \eqref{eq:P_N_1-z_2} and $\|g_i\|\leq1$ that
		$$
		|g_i(P_{N_1}z_2-x)| = |g_i(P_{N_1}(z_2-x))| \leq \|P_{N_1}(z_2-x)\| < \gamma.
		$$
	Thus, for every $i\in\{1,\ldots,m\}$,
		$$
		|g_i(y-x)|<2\gamma<a.
		$$
	
	Since $N_0\leq N_1$, we have $P_{N_0}R_{N_1}=P_{N_0}$.
	Consequently,
		$$
		P_{N_0}(y-x) = \frac{1}{2}P_{N_0}(z_1-x) + \frac{1}{2}P_{N_0}(z_2-x).
		$$
	By \eqref{eq:P_N_0-z_1}, \eqref{eq:P_N_1-z_2}, and the contractivity of $P_{N_0}$, we obtain
		$$
		\|P_{N_0}(y-x)\|<\gamma<r.
		$$
	Hence, $y\in U_0\subset O$.
	
	Choose
		$$
		0 < \delta < \min \left\{r-\gamma,\ \frac{b}{8}\right\}.
		$$
	Denote 
		$$
		V:=U(y,N_2,\delta).
		$$
	Then, $V$ is a $\tau_c$-neighborhood of $y$.
	If $w\in V$, then
		$$
		\|P_{N_0}(w-x)\| \leq \|P_{N_0}(w-y)\| + \|P_{N_0}(y-x)\| \leq \|P_{N_2}(w-y)\| + \|P_{N_0}(y-x)\| < \delta+\gamma < r.
		$$
	Thus, 
		$$
		V\subset U_0\subset O.
		$$
	
	It remains to prove that $\operatorname{diam}f(V)<b$.
	Fix $w\in V$ and define 
		$$
		w_3:=(I-P_{N_2})w.
		$$
	Choose $\sigma\in\mathbb T$ such that 
		\begin{equation}\label{eq:w_3}
			\operatorname{Re}\left(\sigma f(w_3)\right) = |f(w_3)|.
		\end{equation}
	Define
		$$
		\widetilde w := P_{N_1}w - (P_{N_2}-P_{N_1})w + \sigma w_3.
		$$
	The vector $\widetilde w$ is obtained from $w$ by a coordinatewise unimodular change: the coordinates up to $N_1$ are multiplied by $1$, those from $N_1+1$ to $N_2$ by $-1$, and those after $N_2$ by $\sigma$. Hence, $\|\widetilde w\|=\|w\|\leq1$.
	Thus, $\widetilde w\in B_X$.
	Moreover, $\widetilde w$ and $w$ have the same first $N_0$ coordinates. 
	Since $w\in V\subset U_0$, we obtain $\widetilde w\in U_0$.
	Therefore, by \eqref{eq:sup-alpha}, 
		\begin{equation}\label{eq:tilde-w_L+alpha}
			\operatorname{Re}f(\widetilde w)<L+\alpha.
		\end{equation}
	Define
		$$
		\widetilde y := P_{N_1}y - (P_{N_2}-P_{N_1})y.
		$$
	Since $z_1$ is supported in the first $N_1$ coordinates, $z_2$ is supported in the first $N_2$ coordinates, and $y$ is given by \eqref{eq:y_z-1_z-2}, we have
		$$
		\widetilde y = \frac{1}{2}z_1 + \frac{1}{2}z_2.
		$$
	Consequently, by \eqref{eq:Re_f_z-1_alpha}, \eqref{eq:Re_f_z-2_alpha}, and the previous equality
		\begin{equation}\label{eq:tilde-y_L-alpha}
			\operatorname{Re}f(\widetilde y)>L-\alpha.
		\end{equation}
	The coordinatewise sign change which fixes the first $N_1$ coordinates and changes the signs of the coordinates from $N_1+1$ to $N_2$ is an isometry. Hence,
		$$
		\operatorname{Re}f(\widetilde y) - \operatorname{Re}f \left(P_{N_1}w-(P_{N_2}-P_{N_1})w\right) \leq \left\|P_{N_1}w-(P_{N_2}-P_{N_1})w-\widetilde y\right\| = \|P_{N_2}(w-y)\| < \delta.
		$$
	Using \eqref{eq:w_3}, the previous chain of inequalities and \eqref{eq:tilde-y_L-alpha}, we obtain
		$$
		\operatorname{Re}f(\widetilde w) = \operatorname{Re}f \left(P_{N_1}w-(P_{N_2}-P_{N_1})w\right)+|f(w_3)| > \operatorname{Re}f(\widetilde y)-\delta+|f(w_3)| > L-\alpha-\delta+|f(w_3)|.
		$$
	Combining \eqref{eq:tilde-w_L+alpha} and the previous chain of inequalities, we obtain 
		\begin{equation}\label{eq:w_3-alpha-delta}
			|f(w_3)|<2\alpha+\delta
		\end{equation}
	Since $y$ is supported in the first $N_2$ coordinates, $w-y=P_{N_2}(w-y)+w_3$.
	Therefore, by \eqref{eq:w_3-alpha-delta} and the fact that $w\in V$, we have
		$$
		|f(w-y)| \leq |f(P_{N_2}(w-y))|+|f(w_3)| \leq \|P_{N_2}(w-y)\|+|f(w_3)| < \delta+2\alpha+\delta = 2\alpha+2\delta.
		$$
	Since $w\in V$ is arbitrary, for every $u,v\in V$ we have
		$$
		|f(u)-f(v)| \leq |f(u)-f(y)|+|f(v)-f(y)| < 4\alpha+4\delta.
		$$
	By the choice of $\alpha$ and $\delta$, we have $4\alpha+4\delta<b$, finishing the proof.
\end{proof}

Now we can prove Theorem~\ref{thm:pi-base-ball}.

\medskip

\textit{Proof of Theorem~\ref{thm:pi-base-ball}.}
	Let $W$ be a nonempty relatively weakly open subset of $B_X$, and choose $x_0\in W$.
	There exist nonzero functionals $f_1,\ldots,f_m\in X^*$ and $\varepsilon_1,\ldots,\varepsilon_m>0$ such that
		$$
		W_0 := \left\{z\in B_X \colon |f_i(z-x_0)|<\varepsilon_i\text{ for every }i\in\{1,\ldots,m\}\right\} \subset W.
		$$
	For every $i\in\{1,\ldots,m\}$, define
		$$
		g_i:=\frac{f_i}{\|f_i\|} \in S_{X^*} \qquad \text{and} \qquad \eta_i:=\frac{\varepsilon_i}{\|f_i\|}>0.
		$$
	Then,
		$$
		W_0 = \left\{ z\in B_X \colon |g_i(z-x_0)|<\eta_i \text{ for every }i\in\{1,\ldots,m\}\right\}.
		$$
	
	Since $P_Nx_0\to x_0$ in norm, there exists $N\in\mathbb N$ such that
		$$
		x^{(0)}:=P_Nx_0\in W_0.
		$$
	Notice that $x^{(0)}\in B_X\cap c_{00}$.
	Set
		$$
		\rho := \min_{1\leq i\leq m} \left\{\eta_i-\left|g_i\left(x^{(0)}-x_0\right)\right|\right\}.
		$$
	Since $x^{(0)}\in W_0$, we have $\rho>0$.
	Choose
		$$
		0<\theta<\frac{\rho}{m+1}.
		$$
	
	We now apply Lemma~\ref{lem:stabilization} inductively. 
	Set $O^{(0)}:=B_X$.
	Assume that, for some $n\in\{1,\ldots,m\}$, we have already constructed a finitely supported vector $x^{(n-1)}\in B_X$ and a $\tau_c$-neighborhood $O^{(n-1)}$ of $x^{(n-1)}$.
	
	Apply Lemma~\ref{lem:stabilization} with $x=x^{(n-1)}$, $O=O^{(n-1)}$, the family $g_1,\ldots,g_m$, distinguished functional $g_n$, and $a=b=\theta$.
	We obtain a vector $x^{(n)}\in B_X\cap c_{00}$ and a $\tau_c$-neighborhood $O^{(n)}$ of $x^{(n)}$ such that 
		\begin{equation}\label{eq:x_n-O_n}
			x^{(n)}\in O^{(n)}\subset O^{(n-1)},
		\end{equation}
		\begin{equation}\label{eq_x_n-theta}
			\left|g_i\left(x^{(n)}-x^{(n-1)}\right)\right|<\theta
		\end{equation}
	for every $i\in\{1,\ldots,m\}$, and
		\begin{equation}\label{eq:diam_O-j}
			\operatorname{diam}g_n\left(O^{(n)}\right)<\theta.
		\end{equation}
	
	After completing the induction, define $V:=O^{(m)}$.
	Then, $V$ is a nonempty $\tau_c$-open subset of $B_X$.
	We claim that $V\subset W_0$.
	Fix $i\in\{1,\ldots,m\}$ and $z\in V$. 
	Since the neighborhoods are nested, we have $z\in O^{(m)}\subset O^{(i)}$.
	Moreover, by \eqref{eq:x_n-O_n}, $x^{(m)}\in O^{(m)}\subset O^{(i)}$.
	Therefore, by \eqref{eq:diam_O-j} applied at the $i$-th stage,
		$$
		\left|g_i\left(z-x^{(m)}\right)\right|<\theta.
		$$
	On the other hand, by \eqref{eq_x_n-theta},
		$$
		\left|g_i\left(x^{(m)}-x^{(0)}\right)\right| = \left|g_i \left(\sum_{j=1}^m \left(x^{(j)}-x^{(j-1)}\right)\right)\right| \leq \sum_{j=1}^m \left|g_i\left(x^{(j)}-x^{(j-1)}\right)\right| < m\theta.
		$$
	Thus,
		\begin{align*}
			|g_i(z-x_0)| & \leq \left|g_i\left(z-x^{(m)}\right)\right| + \left|g_i\left(x^{(m)}-x^{(0)}\right)\right| + \left|g_i\left(x^{(0)}-x_0\right)\right| \\
			& < \theta+m\theta+\eta_i-\rho \\
			&< \eta_i,
		\end{align*}
	where the last inequality follows from the choice of $\theta$. 
	Since this holds for every $i\in\{1,\ldots,m\}$, we conclude that $z\in W_0$.
	As $z\in V$ is arbitrary, $V\subset W_0\subset W$.
	Thus, every nonempty relatively weakly open subset of $B_X$ contains a nonempty $\tau_c$-open subset.
	
	Finally, let $\mathcal B=\{U_n:n\in\mathbb N\}$ be a countable base for $\tau_c$. 
	Since $\tau_c\subset w|_{B_X}$, every $U_n$ is relatively weakly open in $B_X$. 
	Let $W$ be a nonempty relatively weakly open subset of $B_X$. 
	By the above, there exists a nonempty $\tau_c$-open set $V$ such that $V\subset W$.
	Since $\mathcal B$ is a base for $\tau_c$, there exists $n\in\mathbb N$ such that $\varnothing\neq U_n\subset V\subset W$.
	This completes the proof.
\qed

\medskip

The preceding argument is specific to the unit ball and does not automatically extend to arbitrary bounded convex subsets of $X$. 
The crucial point is that $B_X$ is invariant under every coordinatewise multiplication by scalars of modulus one, an invariance that is essential in the stabilization procedure. 
A general bounded convex subset need not possess this property, and in fact the existence of a countable weak $\pi$-base is not inherited by bounded closed convex subsets. 
Indeed, in the binary tree space $X_T$ (see Section~\ref{sec:non-scd}), the unit ball $B_{X_T}$ has a countable weak $\pi$-base \cite[Theorem~1.6]{LooPerreau2026}, whereas its positive part $B_{X_T}^{+}$ is not SCD \cite[Theorem~3.6]{LooPerreau2026} and therefore cannot have a countable weak $\pi$-base \cite[Proposition~2.21]{AvilesKadetsMartinMeriShepelska2010}. 
Thus, additional hypotheses are required in order to obtain such a conclusion for every bounded convex subset. 
We show below that this is the case when the basis is shrinking or boundedly complete.


The proof combines two essentially different ideas.
For shrinking bases, the conclusion follows from the absence of isomorphic copies of $\ell_1$ together with the characterization of weak $\pi$-bases obtained in \cite{AvilesKadetsMartinMeriShepelska2010}.
For boundedly complete bases, the argument relies instead on duality, the Radon--Nikodým property and the convex point of continuity property.
These two mechanisms will later be abstracted into independent permanence results.

\begin{theorem}\label{thm:pi-base}
	Let $X$ be a Banach space with a Schauder basis.
	If the basis is shrinking or boundedly complete, then every bounded convex subset of $X$ has a countable $\pi$-base for its relative weak topology.
	In particular, $(B_X,w)$ has a countable $\pi$-base.
\end{theorem}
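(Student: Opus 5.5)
The proof splits into the two cases indicated just before the statement, each handled by a different mechanism.

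\emph{Shrinking case.} If the basis is shrinking, then $X^*$ is separable, being spanned by $(e_j^*)_{j=1}^\infty$. Hence $X$ contains no isomorphic copy of $\ell_1$, since $\ell_1^*=\ell_\infty$ is nonseparable and separability of the dual passes to subspaces via Hahn--Banach quotients. $X$ is also separable. I will invoke the characterization from \cite{AvilesKadetsMartinMeriShepelska2010} recalled in the introduction: a separable bounded convex set whose relative weak topology lacks a countable $\pi$-base must contain a sequence equivalent to the $\ell_1$ basis. Every bounded convex $A\subset X$ is separable because $X$ is. So if $(A,w)$ had no countable $\pi$-base, $X$ would contain an $\ell_1$-sequence, which is a contradiction. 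This case is essentially a citation. Alternatively, one can argue directly: on bounded sets the weak topology coincides with the topology generated by the countably many functionals in a dense subset of $X^*$. That topology is metrizable and second countable on $A$, so it has a countable base.

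\emph{Boundedly complete case.} Here I use duality. If $(e_j)$ is boundedly complete, then $X$ is isomorphic to the dual of $Y:=\overline{\operatorname{span}}\{e_j^*\}\subset X^*$, and the $\sigma(X,Y)$ topology on bounded sets is the coordinatewise topology. Since $X$ is a separable dual, it has the RNP. It therefore also has the convex point of continuity property: every bounded closed convex set has points where the identity $(C,w)\to(C,\|\cdot\|)$ is continuous, and in fact (Bourgain) every nonempty relatively weakly open subset of $C$ contains such points.

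Given a bounded convex $A$, I will pass to $C:=\overline{A}$. The key step is to show that for every relatively weakly open $W\neq\varnothing$ in $A$ there is a \emph{norm}-open ball $B(c,\varepsilon)$, $c$ from a fixed countable dense subset $D\subset A$, with $\varnothing\neq$ (a weakly open subset of $A$) $\subset W$, chosen from a countable family. Concretely, the candidate countable family is
\[
\mathcal U:=\bigl\{\operatorname{int}_w\bigl(A\cap B(d,1/k)\bigr)\colon d\in D,\ k\in\mathbb N\bigr\}\setminus\{\varnothing\},
\]
where $\operatorname{int}_w$ is the relative weak interior in $A$. To verify it is a $\pi$-base, take $W=A\cap W'$ with $W'$ weakly open. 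By the CPCP applied to the closure of a weakly open piece, there is a point $x\in \overline{W}$ of weak-to-norm continuity of $C$ lying inside $W'$. So there is a weak neighborhood $U$ of $x$ with $C\cap U\subset W'\cap B(x,\varepsilon)$. Then pick $d\in D\cap U$ close to $x$ and $k$ with $B(d,1/k)\cap C\subset W'$ but still containing a weak neighborhood of $x$ relative to $C$. Then $\operatorname{int}_w(A\cap B(d,1/k))$ is nonempty, since it contains $A\cap U'$ for small $U'$, and it is contained in $W$.

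\emph{Main obstacle.} The main obstacle is the step that passes from $C$ to the nonclosed convex set $A$. One needs that $A\cap U\neq\varnothing$ whenever $C\cap U\neq\varnothing$, which holds since $A$ is norm dense in $C$. One also needs to localize the point of continuity inside the given $W'$. For the latter I will apply the CPCP to the closed convex set $\overline{\operatorname{conv}}(C\cap V)$ for a weak slice or neighborhood $V$, using the fact that for RNP spaces relatively weakly open subsets contain points of continuity (via small-diameter combinations of slices). I expect this localization to require care. The final statement for $B_X$ is then immediate.
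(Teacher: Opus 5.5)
Your shrinking case is correct and is the paper's argument: it cites the $\ell_1$-characterization \cite[Theorem~2.22]{AvilesKadetsMartinMeriShepelska2010}. Your alternative is also valid and more elementary. If $X^*$ is separable, the weak topology on a bounded set is the initial topology of countably many functionals, so it is second countable and you even get a countable base. In the boundedly complete case, your reduction (separable dual, hence RNP, hence CPCP) is the same as the paper's.

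There is a gap at exactly the step you flag. As you set it up, checking that $\mathcal U$ is a $\pi$-base needs a point $x\in W'$ at which $(C,w)\to(C,\|\cdot\|)$ is continuous. Applying CPCP to the closure $K$ of a convex weakly open piece does not give this. It only gives continuity of $(K,w)\to(K,\|\cdot\|)$, at a point that may lie on the relative boundary of the piece. Such a point need not be in $W'$, nor be a point of continuity of $C$. Your suggested repair via Bourgain's lemma does not close this either: a convex combination of small slices is small, but in general it is not relatively weakly open. The density of points of continuity that you attribute to Bourgain is true here, but it needs a nested-neighborhood argument you do not supply. It is also more than you need.

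The missing idea is that a small relatively weakly open subset of $A$ inside a \emph{shrunken} basic neighborhood suffices.

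\begin{enumerate}
\item Given $W$, pick $a\in W$, $f_1,\dots,f_m\in B_{X^*}$ and $\varepsilon>0$ with $A\cap\{y\colon \max_i|f_i(y-a)|<\varepsilon\}\subset W$. Put $W_0:=A\cap\{y\colon \max_i|f_i(y-a)|<\varepsilon/2\}$.
\item Fix $k$ with $1/k<\varepsilon/4$ and set $\delta:=1/(2k)$.
\item Since $X$ has the RNP, the bounded set $W_0$ is dentable. So it has a nonempty slice $V$ with $\operatorname{diam}V<\delta$, and $V$ is relatively weakly open in $A$. (Alternatively, $W_0$ is convex, so CPCP applied to $\overline{W_0}$ gives a small relatively weakly open subset of $\overline{W_0}$, whose trace on $W_0$ is nonempty by norm density.)
\item Choose $v\in V$ and $d\in D$ with $\|d-v\|<\delta$. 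Then $V\subset A\cap B(d,1/k)$.
\item Every $y\in A\cap B(d,1/k)$ satisfies $|f_i(y-a)|\le\|y-v\|+|f_i(v-a)|<1/k+\delta+\varepsilon/2<\varepsilon$.
\end{enumerate}

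Hence $\varnothing\neq V\subset\operatorname{int}_w(A\cap B(d,1/k))\subset W$, and your family $\mathcal U$ is a countable $\pi$-base. This works directly in $A$, with no need to pass to $C$.

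With this repair your route differs from the paper's. The paper applies \cite[Corollary~6.4]{AvilesKadetsMartinMeriShepelska2010} to the separable closed convex set $\overline A$, which has CPCP. It then transfers the resulting $\pi$-base to $A$ by intersecting with $A$, using that $A$ is weakly dense in $\overline A$. Your argument amounts to a self-contained proof of the instance of that corollary that is needed.
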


\begin{proof}
	Let $(e_j)_{j=1}^{\infty}$ be the Schauder basis of $X$, and let
	$(e_j^*)_{j=1}^{\infty}$ be the associated biorthogonal functionals.
	We distinguish the two cases.
	
	Assume first that $(e_j)_{j=1}^{\infty}$ is shrinking.
	Then $(e_j^*)_{j=1}^{\infty}$ is a Schauder basis of $X^*$, and therefore
	$X^*$ is separable.
	Consequently, $X$ contains no isomorphic copy of $\ell_1$.
	
	Let $A\subset X$ be bounded and convex.
	Since $X$ has a Schauder basis, it is separable, and hence $A$ is separable.
	Moreover, $A$ contains no sequence equivalent to the canonical basis of $\ell_1$, since such a sequence would generate an isomorphic copy of $\ell_1$ inside $X$.
	Therefore, by \cite[Theorem~2.22]{AvilesKadetsMartinMeriShepelska2010},
	$(A,w)$ has a countable $\pi$-base.
	
	Assume now that $(e_j)_{j=1}^{\infty}$ is boundedly complete.
	Set $Y := \overline{\operatorname{span}} \{e_n^*:n\in\mathbb N\} \subset X^*$.
	By \cite[Theorem~3.2.15]{AlbiacKalton},
	the sequence $(e_j^*)_{j=1}^{\infty}$ is a shrinking basis of $Y$, and the canonical embedding of $X$ into $Y^*$ is an isomorphism.
	Hence, $X$ is isomorphic to a dual Banach space.
	Since $X$ has a Schauder basis, it is separable.
	Therefore, $Y^*$ is separable.
	By \cite[Proposition~5.5.6]{AlbiacKalton},
	$Y^*$ has the Radon--Nikodým property.
	Since the Radon--Nikodým property is preserved under isomorphisms,
	$X$ also has the Radon--Nikodým property.
	
	Let again $A\subset X$ be bounded and convex.
	Its norm closure $\overline A:=\overline A^{\|\cdot\|}$ is separable, closed, bounded and convex.
	Since $X$ has the Radon--Nikodým property,
	$\overline A$ has the convex point of continuity property.
	Hence, by
	\cite[Corollary~6.4]{AvilesKadetsMartinMeriShepelska2010},
	$(\overline A,w)$ has a countable $\pi$-base,
	say $(U_n)_{n=1}^{\infty}$.
	Since $A$ is convex,
	its norm closure coincides with its weak closure.
	Thus, $A$ is weakly dense in $\overline A$, and every set $V_n:=U_n\cap A$ is nonempty and relatively weakly open in $A$.
	
	We claim that $(V_n)_{n=1}^{\infty}$ is a $\pi$-base for $(A,w)$. Let $W\subset A$ be a nonempty relatively weakly open subset.
	Then, there exists a weakly open subset $G\subset X$ such that $W=G\cap A$.
	Since $W\neq\varnothing$,
	the set
	$G\cap\overline A$
	is a nonempty relatively weakly open subset of $\overline A$.
	Hence, there exists $n\in\mathbb N$ satisfying $U_n\subset G\cap\overline A$.
	Consequently,
	\[
	V_n
	=
	U_n\cap A
	\subset
	G\cap A
	=
	W.
	\]
	Therefore, $(A,w)$ has a countable $\pi$-base.
\end{proof}

%

Theorem~\ref{thm:pi-base} actually reveals two rather different mechanisms behind the existence of countable weak $\pi$-bases.
The shrinking case is governed by the approximation of functionals by finite-dimensional projections, whereas the boundedly complete case is based on duality and the Radon--Nikodým property.

Both arguments admit independent generalizations.
We record them below as permanence results.
The first extends the shrinking argument to general Schauder decompositions, while the second applies to unconditional sums over boundedly complete sequence spaces.

%

\begin{proposition}\label{prop:shrinking-decomposition}
	Let
		$$
		X:=\overline{\bigoplus_{n=1}^{\infty}X_n}
		$$
	be a Schauder decomposition, and let $P_m:X\longrightarrow Y_m:=X_1\oplus\cdots\oplus X_m$ denote the corresponding partial-sum projections. 
	Assume that $\|f\circ (\operatorname{Id}-P_m)\|\longrightarrow 0$ for every $f\in X^*$, and that every bounded convex subset of $Y_m$ has a countable $\pi$-base for its relative weak topology, for every $m\in\mathbb N$. 
	Then, every bounded convex subset of $X$ has a countable $\pi$-base for its relative weak topology.
\end{proposition}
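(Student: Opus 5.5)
Let $A\subset X$ be bounded and convex, say $A\subset MB_X$, and let $K:=\sup_m\|P_m\|<\infty$; this is the decomposition constant. The plan is to transport countable $\pi$-bases from the convex sets $A_m:=P_m(A)\subset Y_m$ back to $A$, using the hypothesis $\|f\circ(\operatorname{Id}-P_m)\|\to0$ to control the error. Each $A_m$ is bounded and convex in $Y_m$, so by assumption $(A_m,w)$ has a countable $\pi$-base $\mathcal U_m$. On $Y_m$ the weak topology of $Y_m$ coincides with the restriction of the weak topology of $X$, by Hahn--Banach. For each $m$ and each $U\in\mathcal U_m$ consider
$$
\widetilde U:=\{z\in A \colon P_mz\in U\}=A\cap P_m^{-1}(U).
$$
This set is relatively weakly open in $A$, because $P_m$ is weak-to-weak continuous, and it is nonempty since $U\subset P_m(A)$. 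The candidate countable $\pi$-base is $\{\widetilde U\colon U\in\mathcal U_m,\ m\in\mathbb N\}$.

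Now let $W\subset A$ be a nonempty relatively weakly open set. Pick $x_0\in W$ and a basic neighbourhood
$$
W_0=\{z\in A\colon |f_i(z-x_0)|<\varepsilon\ \text{ for every } i\in\{1,\dots,k\}\}\subset W,
$$
with $f_1,\ldots,f_k\in X^*$. By hypothesis, choose $m$ with $\|f_i\circ(\operatorname{Id}-P_m)\|<\varepsilon/(4M(1+K))$ for all $i$. Then for every $z\in A$ we get $|f_i(z)-f_i(P_mz)|\le \|f_i\circ(\operatorname{Id}-P_m)\|\,\|z\|<\varepsilon/4$. Consider
$$
G:=\{u\in A_m\colon |f_i(u-P_mx_0)|<\varepsilon/4\ \text{ for every } i\}.
$$
This set is relatively weakly open in $A_m$ and contains $P_mx_0$, so some $U\in\mathcal U_m$ satisfies $U\subset G$. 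For $z\in\widetilde U$ the triangle inequality gives
$$
|f_i(z-x_0)|\le|f_i(z-P_mz)|+|f_i(P_mz-P_mx_0)|+|f_i(P_mx_0-x_0)|<\varepsilon/4+\varepsilon/4+\varepsilon/4<\varepsilon.
$$
Hence $\widetilde U\subset W_0\subset W$, which proves that the family is a countable $\pi$-base.

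The argument is therefore largely routine, and I expect two points to need care. The first is checking that the relative weak topology on $A_m$ seen inside $Y_m$ agrees with the one inherited from $X$, so that the hypothesis on $Y_m$ applies; this holds because $Y_m$ is a closed subspace. The second is verifying that $\widetilde U$ is nonempty, which holds because $U\subseteq P_m(A)$. Convexity of $A$ is used only to ensure that $A_m$ is convex, so that the hypothesis on $Y_m$ can be invoked. Completeness of the decomposition and boundedness of $A$ enter only through the uniform estimate $|f_i(z-P_mz)|\le\|f_i\circ(\operatorname{Id}-P_m)\|M$.
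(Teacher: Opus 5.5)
Your proof is correct and follows essentially the same route as the paper: the same countable family $\{A\cap P_m^{-1}(U)\colon U\in\mathcal U_m,\ m\in\mathbb N\}$, with the hypothesis $\|f\circ(\operatorname{Id}-P_m)\|\to0$ making the tails uniformly small on the bounded set $A$. The only difference is cosmetic: you fix one $m$ for the finitely many functionals defining a basic neighbourhood and conclude with a direct triangle inequality, whereas the paper uses weak neighbourhoods $V+V\subset U$ and argues by contradiction with a sequence $x_m\in\widetilde U_{m,k_m}\setminus W$ (the constant $K=\sup_m\|P_m\|$ in your choice of $m$ is not actually needed).
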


\begin{proof}
	The argument follows the scheme of \cite[Example~6.6]{AvilesKadetsMartinMeriShepelska2010}. 
	Let $A\subset X$ be a nonempty bounded convex set. 
	For every $m\in\mathbb N$, the set $P_m(A)$ is a bounded convex subset of $Y_m$. 
	Hence, there exists a countable $\pi$-base $\mathcal U_m := \{U_{m,k} \colon k\in\mathbb N\}$ for the relative weak topology of $P_m(A)$. 
	For every $m,k\in\mathbb N$, define $\widetilde U_{m,k} := P_m^{-1}(U_{m,k})\cap A$.
	Since $P_m$ is weak-to-weak continuous, each $\widetilde U_{m,k}$ is a nonempty relatively weakly open subset of $A$. 
	We claim that $\mathcal U := \{\widetilde U_{m,k} \colon m,k\in\mathbb N\}$ is a $\pi$-base for the relative weak topology of $A$.
	
	Let $W$ be a nonempty relatively weakly open subset of $A$. 
	Choose $a\in W$ and weak neighborhoods $U$ and $V$ of $0$ in $X$ such that $V+V\subset U$ and $(a+U)\cap A\subset W$.
	Since $P_ma\to a$ in norm, there exists $m_0\in \mathbb N$ such that we have $P_ma\in a+V$ for all $m\geq m_0$. 
	Consequently, for all $m\geq m_0$, we have that $C_m:=(a+V)\cap P_m(A)$ is a nonempty relatively weakly open subset of $P_m(A)$. 
	Thus, for every $m\geq m_0$, there exists $k_m\in\mathbb N$ such that $U_{m,k_m}\subset C_m$.
	Assume, by means of contradiction, that $\widetilde U_{m,k_m}\not\subset W$ for every $m\geq m_0$. 
	Then, we may choose $x_m\in \widetilde U_{m,k_m}\setminus W$ for every $m\geq m_0$.
	Set $z_m:=(I-P_m)x_m$ for every $m\geq m_0$.
	Since $A$ is bounded, there exists $M>0$ such that $\|x_m\|\leq M$ for every $m\geq m_0$. 
	Therefore, for every $f\in X^*$, we have
		$$
		|f(z_m)| = |f\circ (\operatorname{Id}-P_m)(x_m)| \leq M\|f\circ (\operatorname{Id}-P_m)\|\to 0.
		$$
	Hence, $z_m\to 0$ weakly.
	In particular, there exists $m_1\geq m_0$ such that $z_m\in V$ for all $m\geq m_1$.
	On the other hand, $P_mx_m\in U_{m,k_m}\subset a+V$ for every $m\geq m_0$.
	It follows that, for all $m\geq m_1$,
		\[
		x_m=P_mx_m+z_m\in a+V+V\subset a+U.
		\]
	Since $x_m\in A$ for all $m\geq m_0$, we obtain $x_m\in(a+U)\cap A\subset W$ for every $m\geq m_1$, contradicting the choice of $x_m$. 
	Thus, some member of $\mathcal U$ is contained in $W$, and $\mathcal U$ is a countable $\pi$-base for $A$.
\end{proof}

\begin{proposition}\label{prop:boundedly-complete-sum}
	Let $E$ be a Banach sequence space whose canonical basis is $1$-unconditional and boundedly complete, and let $(X_n)_{n=1}^{\infty}$ be a sequence of Banach spaces such that $X_n^*$ is separable for every $n\in\mathbb N$. 
	Then, every bounded convex subset of
		$$
		X:=\left[\bigoplus_{n=1}^{\infty}X_n\right]_E
		$$
	has a countable $\pi$-base for its relative weak topology.
\end{proposition}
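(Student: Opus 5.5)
The plan is to combine the two mechanisms behind Theorem~\ref{thm:pi-base}. We use the finite-sum structure $Y_m:=X_1\oplus\cdots\oplus X_m$ as in Proposition~\ref{prop:shrinking-decomposition}, and bounded completeness of $E$ to control the tails.

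\emph{The finite pieces.} Each $Y_m$ has separable dual $Y_m^*\cong X_1^*\oplus\cdots\oplus X_m^*$, so $Y_m$ contains no copy of $\ell_1$. Hence, by \cite[Theorem~2.22]{AvilesKadetsMartinMeriShepelska2010}, every bounded convex subset of $Y_m$, in particular $P_m(A)$, has a countable weak $\pi$-base $\{U_{m,k}\colon k\in\mathbb N\}$. As in the proof of Proposition~\ref{prop:shrinking-decomposition}, I take the candidate family $\widetilde U_{m,k}:=P_m^{-1}(U_{m,k})\cap A$, possibly intersected with countably many extra relatively weakly open sets controlling tail norms. One can also assume $A$ is norm closed, by the density argument at the end of the proof of Theorem~\ref{thm:pi-base}.

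\emph{The tails.} The hypothesis $\|f\circ(\operatorname{Id}-P_m)\|\to0$ of Proposition~\ref{prop:shrinking-decomposition} fails here (for example $E=\ell_1$), so I would use a decomposition of $X^*$ instead. Let $F\subset E^*$ be the closed span of the coordinate functionals. Since the basis of $E$ is $1$-unconditional and boundedly complete, $E=F^*$ isometrically and $F$ has a shrinking basis. Consequently $Z:=[\bigoplus X_n^*]_F$ is a norming subspace of $X^*$ with $\|g\circ(\operatorname{Id}-P_m)\|\to0$ for every $g\in Z$. Moreover, by $1$-unconditionality, the tail seminorms $t_m(x):=\|(\operatorname{Id}-P_m)x\|$ are nonincreasing in $m$ and lower semicontinuous for $\sigma(X,Z)$. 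I would then run a stabilization argument analogous to Lemma~\ref{lem:stabilization}, with the tail seminorms playing the role of $\operatorname{Re} f$. Given a relatively weakly open $W\ni a$ in $A$, defined by $f_1,\dots,f_k$, consider
\[
L:=\inf_{V\subset W}\ \lim_{m\to\infty}\ \sup_{x\in V} t_m(x),
\]
the infimum being over nonempty relatively weakly open $V\subset W$. The aim is to find $V\subset W$ and $m$ such that, on $V$, each $f_i\circ(\operatorname{Id}-P_m)$ varies by less than a prescribed $\varepsilon$. Once this holds, the argument of Proposition~\ref{prop:shrinking-decomposition} goes through verbatim: the finite part is handled by the $\pi$-base of $P_m(A)$, and the tail part is controlled uniformly on $V$.

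\emph{Main obstacle.} The hard step is showing that the oscillation of $f_i$ on tails can be made small on some relatively weakly open subset. This needs a convexity/averaging trick in the spirit of the $R_{N_1}$-construction in Lemma~\ref{lem:stabilization}: average two nearly extremal points $z_1,z_2\in A$ whose supports are separated beyond level $m$. By $1$-unconditionality and the fact that $E=F^*$ has the Radon--Nikodým property (its predual $F$ has a shrinking basis), the averaged tail becomes strictly smaller, by a fixed amount, unless the tail contribution of each $f_i$ is already almost constant. Unlike $B_X$, a general $A$ is not invariant under sign changes, so the averaging must be done with convex combinations inside $A$ only. I would first try to use the dentability of bounded subsets of $E$, applied to the vector of block norms $(\|x_n\|)_n$, to locate such slices. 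This is the step where I am least certain the argument closes.
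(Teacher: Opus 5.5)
Your finite-dimensional step is fine, and so are the duality facts ($E=F^*$ isometrically, and $t_m$ nonincreasing and $\sigma(X,Z)$-lower semicontinuous). But the argument has a genuine gap exactly where you flag it, and a second one you do not flag.

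\textbf{The averaging mechanism.} It cannot work as stated. Take $E=\ell_1$ and $X_n=\mathbb K$. If the parts of $z_1,z_2$ beyond $m$ are disjointly supported, then $t_m(\tfrac12(z_1+z_2))=\tfrac12t_m(z_1)+\tfrac12t_m(z_2)$, so averaging never gives a decrease by a fixed amount. A strict decrease of averaged tails is a strict-convexity phenomenon. It does not follow from $1$-unconditionality or from the Radon--Nikod\'ym property of $E$. You also give no argument that near-optimality for $L$ yields anything without the sign invariance exploited in Lemma~\ref{lem:stabilization}.

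\textbf{Countability.} Suppose you did have a $W$-dependent $V\subset W$ on which the tails are controlled. Proposition~\ref{prop:shrinking-decomposition} still does not go through verbatim. The sets $P_m^{-1}(U_{m,k})\cap A$ contain points of $A\setminus V$ whose tails are uncontrolled; nothing replaces $\|f\circ(\operatorname{Id}-P_m)\|\to0$ on all of $A$. Intersecting with $V$ makes the family depend on $W$, so countability is lost. The ``countably many extra relatively weakly open sets'' are never identified.

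\textbf{How the paper avoids both problems.} It first invokes \cite[Lemma~6.9]{AvilesKadetsMartinMeriShepelska2010}. Since $X$ is separable, it suffices to find, for every nonempty closed bounded convex $A$ and every $\varepsilon>0$, a point $x_0\in A$ and relatively weakly open sets $U_k\subset A$ such that every weak neighbourhood $V$ of $x_0$ satisfies $U_k\subset V+\varepsilon B_X$ for some $k$. So tail control is needed on only one relatively weakly open set, and the $\varepsilon$-fattening absorbs the tails.

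That set comes from your last idea, used as a slice argument rather than an averaging one.
\begin{enumerate}
\item Let $A_E$ be the sign-invariant set of all $a\in E$ with $|a_n|=\|x_n\|$ for some $x\in A$, and let $K=\overline{\operatorname{conv}}(A_E)$. Since $E$ has the RNP, $K$ is dentable. So some slice of $K$ of diameter $<\varepsilon/4$ meets $A_E$, and it can be given by $b\in E^*$ with $b_n\ge0$.
\item Take $x\in A$ with $(\|x_n\|)_n$ in that slice, and $x_n^*\in S_{X_n^*}$ norming $x_n$. The functional $f(y)=\sum_nb_nx_n^*(y_n)$ satisfies $\operatorname{Re}f(y)\le\sum_nb_n\|y_n\|$.
\item Hence every $y\in S:=\{y\in A:\operatorname{Re}f(y)>\alpha\}$ has its block-norm vector in the small slice. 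By $1$-unconditionality, $\|y-P_my\|<\varepsilon/2$ on $S$ for one fixed $m$.
\item Finally, $U_k:=P_m^{-1}(V_k)\cap S$, where $(V_k)$ is a countable local base at $P_mx_0$ for the weak topology of $P_m(S)$. It exists because bounded subsets of $Y_m$ are weakly metrizable, $Y_m^*$ being separable.
\end{enumerate}
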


\begin{proof}
	Since $X_n^*$ is separable, $X_n$ is separable for every $n\in\mathbb N$. Hence, $X$ is separable. 
	By \cite[Lemma~6.9]{AvilesKadetsMartinMeriShepelska2010}, it is enough to prove that, for every nonempty closed bounded convex subset $A\subset X$ and $\varepsilon>0$, there exist $x_0\in A$ and a sequence $(U_k)_{k=1}^{\infty}$ of relatively weakly open subsets of $A$ such that, for every weak neighborhood $V$ of $x_0$ in $X$, we have $U_k\subset V+\varepsilon B_X$ for some $k\in\mathbb N$.
	
	Fix a nonempty closed bounded convex set $A\subset X$ and $\varepsilon>0$. We first obtain a relatively weakly open subset of $A$ on which the tails are uniformly small, following the argument of \cite[Theorem~3.12]{AvilesKadetsMartinMeriShepelska2010}.
	
	Consider the bounded subset
		$$
		A_E:= \left\{ a=(a_n)_{n=1}^{\infty}\in E \colon \text{there exists } (x_n)_{n=1}^{\infty}\in A \text{ such that }|a_n|=\|x_n\|\text{ for every }n\in \mathbb N\right\},
		$$
	and set $K := \overline{\operatorname{conv}}(A_E)$. 
	Since the canonical basis of $E$ is boundedly complete, $E$ is isomorphic to a separable dual space and therefore has the Radon--Nikodým property (recall \cite[Theorem~3.2.15]{AlbiacKalton} and \cite[Proposition~5.5.6]{AlbiacKalton}). 
	Thus, $K$ is dentable (see \cite[Section~2]{Bourgin1983}).
	Hence, there are $b = (b_n)_{n=1}^\infty \in E^*$ and $\alpha\in\mathbb R$
	such that $S(K,b,\alpha) = \{a\in K \colon \operatorname{Re}b(a)>\alpha\} \neq \varnothing$ and $\operatorname{diam}(S(K,b,\alpha))<\varepsilon/4$. 
	Note that
		\begin{equation}\label{eq:sup}
			\sup_{a\in K} \operatorname{Re}b(a)=\sup_{a\in A_E}\operatorname{Re}b(a),
		\end{equation}
	Indeed, since $A_E\subset K$, one inequality is immediate. 
	For the reverse one, let $M:=\sup_{a\in A_E}\operatorname{Re}b(a)$.
	If $u\in\operatorname{conv}(A_E)$, then there are $a_1,\ldots,a_N\in A_E$ and $\lambda_1,\ldots,\lambda_N\geq 0$ with $\sum_{j=1}^N\lambda_j=1$ such that $u=\sum_{j=1}^N\lambda_j a_j$.
	Therefore,
		$$
		\operatorname{Re}b(u) = \sum_{j=1}^N\lambda_j\operatorname{Re}b(a_j) \leq \sum_{j=1}^N\lambda_j M = M.
		$$
	Thus, $\sup_{u\in\operatorname{conv}(A_E)}\operatorname{Re}b(u)\leq M$.
	Since $\operatorname{Re}b$ is continuous, it follows that $\sup_{a\in K}\operatorname{Re}b(a)\leq M$.
	As \eqref{eq:sup} holds, we may choose the slice $S(K,b,\alpha)$ so that it intersects $A_E$. 
	Therefore, $R_E:=\left\{ a\in A_E \colon \operatorname{Re} b(a)>\alpha\right\} \neq \varnothing$ and $\operatorname{diam}(R_E) < \varepsilon/4$. 
	Since $A_E$, and hence $K$, is invariant under coordinatewise changes of signs (or phases in the complex case), we may compose $b$ with a suitable coordinatewise isometry of $E$ and assume that $b_n\geq0$ for every $n\in\mathbb N$.
	Choose $x=(x_n)_{n=1}^\infty \in A$ such that $u:=(\|x_n\|)_{n=1}^{\infty}\in R_E$.
	By Hahn--Banach theorem, for every $n\in\mathbb N$, take $x_n^*\in S_{X_n^*}$ satisfying $x_n^*(x_n)=\|x_n\|$.
	Define $f\in X^*$ by
		\[
		f((y_n)_{n=1}^{\infty}) := \sum_{n=1}^{\infty}b_nx_n^*(y_n).
		\]
	Consider the relatively weakly open subset $S := \left\{ y\in A \colon  \operatorname{Re}f(y)>\alpha\right\}$.
	Notice that $x\in S$. Moreover, if $y=(y_n)_{n=1}^{\infty}\in S$, then
		$$
		\sum_{n=1}^{\infty}b_n\|y_n\| \geq \operatorname{Re}\sum_{n=1}^{\infty}b_nx_n^*(y_n) > \alpha.
		$$
	Thus, $v_y := (\|y_n\|)_{n=1}^{\infty}\in R_E$.
	Since $\operatorname{diam}(R_E)<\varepsilon/4$, we obtain $\|v_y-u\|_E<\varepsilon/4$ for every $y\in S$.
	Choose $m\in\mathbb N$ such that
		$$
		\left\|(0,\ldots,0,\|x_{m+1}\|,\|x_{m+2}\|,\ldots)\right\|_E < \frac{\varepsilon}{4}.
		$$
	Then, for every $y=(y_n)_{n=1}^\infty \in S$, we have
		\begin{align}
			\|y-P_m y\| & = \left\|(0,\ldots,0,\|y_{m+1}\|,\|y_{m+2}\|,\ldots)\right\|_E \nonumber\\
			& \leq \left\|(0,\ldots,0,\|x_{m+1}\|,\|x_{m+2}\|,\ldots)\right\|_E + \|v_y-u\|_E \nonumber \\
			& < \frac{\varepsilon}{2}\label{boundS}.
		\end{align}
	Fix $x_0\in S$. 
	Since $Y_m^*$ is separable, the weak topology on bounded subsets of $Y_m$ is metrizable. 
	Hence, there exists a countable local base $\{V_k \colon k\in\mathbb N\}$ at $P_mx_0$ for the relative weak topology of $P_m(S)$. 
	For every $k\in\mathbb N$, define $U_k:=P_m^{-1}(V_k)\cap S$.
	Each $U_k$ is a relatively weakly open subset of $A$ containing $x_0$.
	Let $V$ be an arbitrary weak neighborhood of $x_0$ in $X$. 
	Since $\|x_0-P_mx_0\|<\varepsilon/2$ by \eqref{boundS}, the set $\left(V+\frac{\varepsilon}{2}B_X\right)\cap P_m(S)$ is a relative weak neighborhood of $P_mx_0$ in $P_m(S)$. Therefore, there exists $k\in\mathbb N$ such that $V_k\subset V+\frac{\varepsilon}{2}B_X$.
	If $y\in U_k$, then $P_my\in V_k\subset V+\frac{\varepsilon}{2}B_X$ and $\|y-P_my\|<\varepsilon/2$ by \eqref{boundS}.
	Consequently, $y\in V+\varepsilon B_X$.
	Indeed, simply note that
		$$
		y = P_my+(y-P_my) = v+z+(y-P_my),
		$$
	where $v\in V$ and $z\in \frac{\varepsilon}{2}B_X$, and also that
		$$
		\|z+(y-P_my)\| \leq \|z\|+\|y-P_my\| < \frac{\varepsilon}{2}+\frac{\varepsilon}{2}=\varepsilon.
		$$	
	Thus, $U_k\subset V+\varepsilon B_X$.
	The conclusion now follows from \cite[Lemma~6.9]{AvilesKadetsMartinMeriShepelska2010}.
\end{proof}



\section{$k$-unconditional bases with non-SCD unit balls}\label{sec:non-scd}

Throughout this final section, we work over the real scalar field. We now turn to the quantitative unconditional renorming problem posed in~\cite[Question~5.4]{LooPerreau2026}.
We shall prove that, for every $k>1$, there exists a Banach space with a $k$-unconditional basis whose unit ball fails to be SCD.

We shall use the binary tree space considered by L\~oo and Perreau in~\cite{LooPerreau2026}, which goes back to Talagrand's construction of Banach spaces generated by adequate families~\cite{Talagrand1979,Talagrand1984}.
For completeness, we recall its definition.

Let
	\[
	T := \{\varnothing\}\cup \bigcup_{n=1}^{\infty} \{0,1\}^n
	\]
be the infinite binary tree.
If $s,t\in T$, we write $s\preceq t$ whenever $s$ is an initial segment of $t$.
A \emph{branch} of $T$ is a maximal chain for this order.
Let $\mathcal A$ be the family of branches of $T$.
We define a norm on $c_{00}(T)$ by
	\[
	\|x\| := \sup_{\beta\in \mathcal A} \sum_{t\in \beta} |x(t)|,
	\]
for every $x\in c_{00}(T)$.
The binary tree space $X_T$ is the completion of $c_{00}(T)$ for this norm.
For each $t\in T$, we denote by $e_t$ the canonical unit vector at $t$.
Then $(e_t)_{t\in T}$ is a normalized $1$-unconditional basis of $X_T$.

We shall also use the following elementary facts.
If $\beta \in \mathcal A$, then the functional
	\[
	f_\beta(x) := \sum_{t\in\beta} x(t)
	\]
is well-defined and belongs to $S_{X_T^*}$.
Moreover, if $x\in X_T^+$ (the positive cone of $X_T$), then
	\[
	\|x\| = \sup_{\beta\in \mathcal A}\sum_{t\in\beta} x(t).
	\]
This supremum is attained.
In particular, if $x\in S_{X_T}^+$, then there exists $\beta\in \mathcal A$ such that
	\[
	f_\beta(x) = \sum_{t\in\beta} x(t)=1.
	\]

From now on, set $X:=X_T$.
Denote by \(X^+\) and \(X^-\) the positive and negative cones of \(X\), respectively, and set
	\[
	B_X^+ := B_X\cap X^+ \qquad \text{and} \qquad B_X^- := B_X\cap X^-,
	\]
and
	\[
	S_X^+ := S_X\cap X^+.
	\]
For every \(\tau\in(0,1)\), define
	\[
	C_\tau := \overline{\operatorname{conv}} \left(B_X^+ \cup B_X^- \cup \tau B_X\right).
	\]
Let \(\|\cdot\|_\tau\) denote the Minkowski functional associated with \(C_\tau\). 
Since $\tau B_X \subset C_\tau \subset B_X$, the set \(C_\tau\) is closed, convex, balanced and absorbing.
Therefore, \(\|\cdot\|_\tau\) defines an equivalent norm on \(X\), and its closed unit ball coincides with \(C_\tau\).
Moreover, the inclusions above imply that
	\begin{equation}\label{eq:norm-equivalence-tau}
		\|x\| \le \|x\|_\tau \le \tau^{-1}\|x\|,
	\end{equation}
for all $x\in X$.

\medskip

We first establish the following analogue of \cite[Proposition~3.10]{LooPerreau2026}.
	
\begin{proposition}\label{prop:analogue-3.10}
	If \(x\in S_X^+\) is a SCD point of \(C_\tau\), then \(x\) is a SCD point of \(B_X^+\).
\end{proposition}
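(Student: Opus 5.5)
Given $x\in S_X^+$ and a sequence of slices $(S_n)_{n=1}^\infty$ of $C_\tau$ determining for $x$, I will produce slices $(S_n')_{n=1}^\infty$ of $B_X^+$ that are determining for $x$. Two facts drive the construction. First, $x$ has $\|\cdot\|$-norm $1$, and since $C_\tau\subset B_X$ and $x\in B_X^+\subset C_\tau$, we get $\|x\|_\tau=1$. Second, a point of $B_X^-\cup\tau B_X$ is far from $x$ in a sense witnessed by a branch functional. Choose $\beta\in\mathcal A$ with $f_\beta(x)=1$. Every $z\in B_X^-$ satisfies $f_\beta(z)\le 0$, and every $z\in\tau B_X$ satisfies $f_\beta(z)\le\tau$. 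Hence $\sup_{C_\tau}f_\beta=1$, attained at $x$, and for $\alpha<1-\tau$ the slice $S(C_\tau,f_\beta,\alpha)$ contains only points that are, in a convex-combination sense, mostly made of positive pieces.

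For the construction, set $S_0:=S(C_\tau,f_\beta,\alpha)$ with small $\alpha$, and replace each $S_n$ by slices of $C_\tau$ meeting $S_0$. The cleanest route is a localization lemma of the type used in \cite[Proposition~3.10]{LooPerreau2026}: if $x$ is an SCD point of a set, then the determining sequence can be chosen inside any fixed slice containing $x$. I would reprove this via the standard fact that determining sequences can be enlarged, and that intersecting with a slice containing $x$ keeps the family determining up to $\varepsilon$, then let $\varepsilon\to0$ using countably many $\alpha_k\to0$.

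Each point $z\in C_\tau$ with $f_\beta(z)>1-\alpha$ is close to $\operatorname{conv}(B_X^+\cup B_X^-\cup\tau B_X)$. Writing $z\approx\lambda p+\mu q+\nu r$ with $p\in B_X^+$, $q\in B_X^-$, $r\in\tau B_X$, the inequality $f_\beta(z)\le\lambda+\nu\tau$ forces $\mu\le\alpha$ and $\nu\le\alpha/(1-\tau)$. So $z$ lies within $O(\alpha)$ of $B_X^+$. For a slice $S(C_\tau,g,\gamma)$, define the slice of $B_X^+$ as $S(B_X^+,g+Mf_\beta,\gamma')$ with suitable $M$ and $\gamma'$. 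The point is that choosing $y_n$ in these positive slices yields points that also lie in the original slices of $C_\tau$. This holds provided $\sup_{B_X^+}(g+Mf_\beta)$ is comparable to $\sup_{C_\tau}(g+Mf_\beta)$, which follows since the sup of the combined functional over $C_\tau$ is approached near $B_X^+$ by the estimate above. Then any choice $y_n\in S_n'$ gives $y_n\in S_n$, so $x\in\overline{\operatorname{conv}}\{y_n\}$, and $x$ is an SCD point of $B_X^+$.

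The main obstacle is this last transfer. Slices of $C_\tau$ need not meet $B_X^+$, and even when they do, the supremum of $g$ over $C_\tau$ may be strictly larger than over $B_X^+$. I would handle it by working with the countable family of combined functionals $g_n+kf_\beta$ for $k\in\mathbb N$. For large $k$, the near-maximizers of $g_n+kf_\beta$ over $C_\tau$ are $O(1/k)$-close to $B_X^+$ and in fact lie in $S_n$ whenever $S_n$ meets $S_0$. Combining with the localization lemma then yields the determining sequence, using an approximation argument: $x\in\overline{\operatorname{conv}}$ up to $\varepsilon$ for each $\varepsilon$, and then diagonalizing over countably many $\varepsilon$.
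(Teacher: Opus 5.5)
\textbf{The transfer step fails as stated.} Your preliminary estimates are correct, but the transfer step carries the whole proof, and it does not work.

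The claim that near-maximizers of $g_n+kf_\beta$ lie in $S_n$ whenever $S_n$ meets $S_0=S(C_\tau,f_\beta,\alpha)$ is false. Take $S_n=S(C_\tau,-f_\beta,2-\alpha/2)=\{z\in C_\tau\colon f_\beta(z)<1-\alpha/2\}$. It meets $S_0$, since it contains $(1-3\alpha/4)x$. Yet for every $k>1$ the near-maximizers of $(k-1)f_\beta$, over $C_\tau$ or over $B_X^+$, have $f_\beta$ close to $1$ and lie outside $S_n$.

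In general, a point $z_0\in S_n\cap S_0$ only places near-maximizers of $g_n+kf_\beta$ in the enlarged slice $S(C_\tau,g_n,\gamma_n+k\alpha+\delta)$. You cannot make $k\alpha$ small while also needing $k$ large. Since adding slices to a determining sequence keeps it determining, such slices cannot be excluded a priori.

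The localization lemma does not rescue this. What is true is that the relatively weakly open sets $S_n\cap S_0$ are determining, not that slices of $C_\tau$ can be taken inside $S_0$. In any case, membership in $S_0$ does not place the chosen points of $B_X^+$ inside $S_n$.

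\textbf{The missing idea: no transformation of slices is needed.}
\begin{itemize}
\item If $S_n=S(C_\tau,f_n,\alpha_n)$ meets $B_X^+$, then $S_n\cap B_X^+=S(B_X^+,f_n,\alpha_n^+)$ with $\alpha_n^+=\sup_{B_X^+}f_n-\sup_{C_\tau}f_n+\alpha_n>0$. Points chosen there automatically lie in $S_n$.
\item If $S_n$ misses $B_X^+$, write $C_\tau=\overline{\operatorname{conv}}(B_X^+\cup D_\tau)$ with $D_\tau:=\overline{\operatorname{conv}}(B_X^-\cup\tau B_X)$. Then $\sup_{B_X^+}f_n<\sup_{C_\tau}f_n=\max\{\sup_{B_X^+}f_n,\sup_{D_\tau}f_n\}$ forces $\sup_{D_\tau}f_n=\sup_{C_\tau}f_n$. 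So you can fix $x_n^-\in S_n\cap D_\tau$.
\item Choose arbitrary $x_n^+\in S_n\cap B_X^+$ at the remaining indices. The determining property gives $\|x-(\lambda y+(1-\lambda)z)\|<\varepsilon$ with $y\in\operatorname{conv}\{x_n^+\}$ and $z\in\operatorname{conv}\{x_n^-\}\subset D_\tau$.
\item Since $f_\beta\le\tau$ on $D_\tau$, applying $f_\beta$ yields $1-\varepsilon<\lambda+(1-\lambda)\tau$. Hence $1-\lambda<\varepsilon/(1-\tau)$ and $\|x-y\|<\varepsilon+2\varepsilon/(1-\tau)$.
\item The same computation shows that some $S_n$ must meet $B_X^+$.
\end{itemize}
In other words, apply your $f_\beta$-weight estimate to the convex combination approximating $x$, not to individual points of the slices. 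This is exactly the paper's argument, and it needs no localization, combined functionals or diagonalization.
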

	
\begin{proof}
	Let \((S_n)_{n=1}^\infty\) be a sequence of slices of \(C_\tau\) which is determining for \(x\) in \(C_\tau\).
	Define
		\[
		M := \{ n\in\mathbb N \colon S_n\cap B_X^+\neq\varnothing \} \qquad \text{and} \qquad D_\tau := \overline{\operatorname{conv}} \left(B_X^-\cup \tau B_X\right).
		\]
	Then,
		\begin{equation}\label{eq:Ctau-decomposition}
			C_\tau = \overline{\operatorname{conv}} \left(B_X^+\cup D_\tau\right).
		\end{equation}
	We shall prove that the sequence $\left(S_n\cap B_X^+\right)_{n\in M}$ is determining for \(x\) in \(B_X^+\).
	
	We first claim that $S_n\cap D_\tau\neq\varnothing$ for all $n\notin M$.
	Fix \(n\notin M\). 
	Since \(S_n\) is a slice of \(C_\tau\), there exist \(f_n\in X^*\setminus\{0\}\) and \(\alpha_n>0\) such that $S_n = S(C_\tau,f_n,\alpha_n)$.
	As \(S_n\cap B_X^+ = \varnothing\) and by \eqref{eq:Ctau-decomposition}, we have
		\[
		\sup_{z\in B_X^+} f_n(z) \le \sup_{z\in C_\tau}f_n(z)-\alpha_n < \sup_{z\in C_\tau}f_n(z) = \max \left\{\sup_{z\in B_X^+}f_n(z),\,\sup_{z\in D_\tau}f_n(z)\right\}.
		\]
	Thus, it follows that
		\[
		\sup_{z\in D_\tau}f_n(z) = \sup_{z\in C_\tau}f_n(z).
		\]
	Consequently, $S_n\cap D_\tau\neq\varnothing$, as claimed.
	
	We next show that $M\neq\varnothing$. 
	Assume otherwise, then $S_n\cap B_X^+=\varnothing$ for every $n\in\mathbb N$. 
	By the argument in the previous paragraph, this implies $S_n\cap D_\tau\neq\varnothing$ for every $n\in \mathbb N$. 
	Choose $x_n\in S_n\cap D_\tau$ for each $n\in \mathbb N$. 
	Since $(S_n)_{n=1}^\infty$ is determining for $x$ in $C_\tau$, we obtain $x\in \overline{\operatorname{conv}}\{x_n:n\in\mathbb N\}\subset D_\tau$.
	However, since $x\in S_X^+$, there exists a branch $\beta$ such that $f_\beta(x)=1$, while $f_\beta(z)\leq \tau$ for every $z\in D_\tau$. 
	Indeed, $f_\beta(z)\leq 0$ for $z\in B_X^-$, while $f_\beta(z)\leq \tau$ for $z\in \tau B_X$, and the inequality extends to $D_\tau$ by convexity and continuity. 
	Since $\tau<1$, this contradicts $x\in D_\tau$, and thus $M\neq\varnothing$.
	
	Now we claim that, for every $n\in M$, the set $S_n\cap B_X^+$ is a slice of $B_X^+$. 
	Indeed, if $S_n=S(C_\tau,f_n,\alpha_n)$, then $S_n\cap B_X^+\neq\varnothing$ implies
		$$
		\alpha_n^+ := \sup_{z\in B_X^+} f_n(z) - \left(\sup_{z\in C_\tau} f_n(z)-\alpha_n\right)>0,
		$$
	and $S_n\cap B_X^+ = S(B_X^+,f_n,\alpha_n^+)$.
	
	For every $n\in M$, choose $x_n^+\in S_n\cap B_X^+$; and for every $n\notin M$, choose $x_n^-\in S_n\cap D_\tau$.
	Define
		\[
		x_n := 
		\begin{cases}
			x_n^+ & \text{if } n\in M, \\
			x_n^- & \text{if } n\notin M.
		\end{cases}
		\]
	Since \((S_n)_{n=1}^\infty\) is determining for \(x\) in \(C_\tau\), we have $x\in \overline{\operatorname{conv}} \{x_n:n\in\mathbb N\}$.
	Fix $\varepsilon>0$. 
	Since $x\in \overline{\operatorname{conv}}\{x_n \colon n\in\mathbb N\}$, there is a finite convex combination of the vectors $x_n$ whose distance to $x$ is less than $\varepsilon$. 
	Separating the terms with indices in $M$ from those with indices outside $M$, we obtain
		\begin{equation}\label{eq:approximation-x}
			\|x-(\lambda y+(1-\lambda)z)\| < \varepsilon,
		\end{equation}
	where $\lambda\in[0,1]$, $y\in\operatorname{conv}\{x_n^+ \colon n\in M\}$, and, if
	$\lambda<1$, then $z\in\operatorname{conv}\{x_n^- \colon n\notin M\}\subset D_\tau$.
	If $\lambda=1$, the choice of $z$ is irrelevant.
	Since \(x\in S_X^+\), by the definition of the norm of the binary tree space there is a branch \(\beta\) such that
		\[
		\sum_{t\in\beta}x(t)=\|x\|=1.
		\]
	Define
		\[
		f_\beta(w) := \sum_{t\in\beta} w(t),
		\]
	for all $w\in X$.
	Again, by the definition of the norm of the binary tree space, $\|f_\beta\|=1$ and $f_\beta(x)=1$.
	Since \(y\in B_X^+\), we have $f_\beta(y)\le1$.
	We next show that
		\begin{equation}\label{eq:fbeta-Dtau}
			f_\beta(w)\le\tau,
		\end{equation}
	for all $w\in D_\tau$.
	Indeed, if \(w\in B_X^-\), then $f_\beta(w)\le0$.
	On the other hand, if \(w\in\tau B_X\), then $f_\beta(w) \le \tau\|f_\beta\| = \tau$.
	Since \(D_\tau\) is the closed convex hull of \(B_X^-\cup\tau B_X\), inequality \eqref{eq:fbeta-Dtau} follows by convexity and continuity.
	Applying \(f_\beta\) to \eqref{eq:approximation-x}, we obtain
		\[
		1-\varepsilon < f_\beta(\lambda y+(1-\lambda)z) = \lambda f_\beta(y) + (1-\lambda)f_\beta(z) \le \lambda+(1-\lambda)\tau = 1-(1-\lambda)(1-\tau),
		\]
	which yields
		\begin{equation}\label{eq:lambda-estimate}
			1-\lambda < \frac{\varepsilon}{1-\tau}.
		\end{equation}
	Finally, by the triangle inequality, \eqref{eq:approximation-x}, \eqref{eq:lambda-estimate} and the fact that \(y,z\in B_X\), we have
		\begin{align*}
			\|x-y\| & \le \|x-(\lambda y+(1-\lambda)z)\| + \|(\lambda y+(1-\lambda)z)-y\| \\
			& < \varepsilon + \|(1-\lambda)(z-y)\| \\
			& \le \varepsilon + 2(1-\lambda) \\
			& < \varepsilon + \frac{2\varepsilon}{1-\tau}.
		\end{align*}
	Since \(\varepsilon>0\) is arbitrary, we have $x \in \overline{\operatorname{conv}} \{x_n^+ \colon n\in M\}$.
	Therefore, the sequence $\left(S_n\cap B_{X^+}\right)_{n\in M}$ is determining for \(x\) in \(B_X^+\), proving that \(x\) is a SCD point of \(B_X^+\).
\end{proof}

\begin{proposition}
	\label{prop:tau-unconditional-non-scd}
	For every \(\tau\in(0,1)\), the canonical basis
	\((e_t)_{t\in T}\) is \(\tau^{-1}\)-unconditional for the norm
	\(\|\cdot\|_\tau\), and the unit ball \(C_\tau\) is not SCD.
\end{proposition}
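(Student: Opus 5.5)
The proof splits into two independent parts: the unconditionality constant and the failure of SCD.

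\emph{Unconditionality.} I would use \eqref{eq:norm-equivalence-tau} together with the $1$-unconditionality of $(e_t)_{t\in T}$ in $\|\cdot\|$. Take finitely supported $x=\sum a_te_t$ and $y=\sum b_te_t$ with $|a_t|\le|b_t|$. Then
$$\|x\|_\tau\le\tau^{-1}\|x\|\le\tau^{-1}\|y\|\le\tau^{-1}\|y\|_\tau.$$
This is exactly $\tau^{-1}$-unconditionality. The basis property of $(e_t)$ in the new norm holds because the two norms are equivalent.

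\emph{Non-SCD.} I would argue by contradiction. Suppose $C_\tau$ is SCD. The space $X$ is separable, since it has a basis, and $C_\tau$ is closed, bounded and convex. By the parenthetical remark in the definition of SCD sets, every point of $C_\tau$ is then an SCD point of $C_\tau$. In particular every $x\in S_X^+$ is an SCD point of $C_\tau$; note that $x\in B_X^+\subset C_\tau$, so these points lie in $C_\tau$ (with $\|x\|_\tau=1$). Proposition~\ref{prop:analogue-3.10} then makes every $x\in S_X^+$ an SCD point of $B_X^+$.

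It remains to contradict the non-SCD behaviour of $B_X^+$ from \cite{LooPerreau2026}. The cleanest route is to invoke the pointwise result there, namely that points of $S_X^+$ (or at least some such point) are not SCD points of $B_X^+$; this is the statement that \cite[Proposition~3.10]{LooPerreau2026} is designed to exploit in the $2$-unconditional case. If only \cite[Theorem~3.6]{LooPerreau2026} ($B_X^+$ is not SCD) is available, I would need to upgrade. The plan would be to show that every point of $B_X^+$ is an SCD point, using the fact that points of the sphere suffice. Concretely, a point $x\in B_X^+$ with $\|x\|<1$ is a convex combination of $0$ and $x/\|x\|$. The point $0$ is a convex combination of points of $S_X^+$ only approximately, by averaging disjoint normalized blocks, so this step is not automatic. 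One would then use that a countable union of determining sequences for a dense set of sphere points, together with the fact that SCD points are stable under convex combinations in the appropriate sense (\cite{LangemetsLooMartinRueda2024}), gives a sequence of slices witnessing that $B_X^+$ is SCD. Every point being SCD would then contradict Theorem~3.6.

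I expect this final transfer to be the main obstacle: passing from ``all points of $S_X^+$ are SCD points of $B_X^+$'' to a contradiction with the cited non-SCD result. I would first check whether \cite{LooPerreau2026} states the failure pointwise on $S_X^+$, which would make the argument immediate, and otherwise supply the convex-combination/density argument above.
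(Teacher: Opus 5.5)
Your proposal is correct and follows the paper's proof. The unconditionality part is the same sandwich via \eqref{eq:norm-equivalence-tau} and $1$-unconditionality of the original norm; the paper states it through sign-change operators, while your domination form is if anything closer to the paper's definition. The non-SCD part is the same contradiction through Proposition~\ref{prop:analogue-3.10}.

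The pointwise input you were unsure about does exist. \cite[Proposition~3.9]{LooPerreau2026} gives a point of $S_X^+$ that is not an SCD point of $B_X^+$, and this is exactly what the paper cites, so your fallback via Theorem~3.6 is unnecessary. It would nonetheless work: averaging $e_t$ over the $2^n$ nodes of level $n$ gives a vector of norm $2^{-n}$, so $0\in\overline{\operatorname{conv}}(S_X^+)$ and hence $\overline{\operatorname{conv}}(S_X^+)=B_X^+$.
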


\begin{proof}
	We first prove the unconditionality estimate. 
	Let $\varepsilon=(\varepsilon_t)_{t\in T}\in\{-1,1\}^T$, and define the sign-change operator $S_\varepsilon:X\longrightarrow X$ by
		\[
		S_\varepsilon\left(\sum_{t\in T} a_t e_t\right) := \sum_{t\in T}\varepsilon_t a_t e_t.
		\]
	Since the canonical basis of \(X\) is \(1\)-unconditional for the original norm of \(X\), we have $\|S_\varepsilon x\| = \|x\|$ for all $x\in X$.	
	Using \eqref{eq:norm-equivalence-tau}, we obtain
		\[
		\|S_\varepsilon x\|_\tau \le \tau^{-1}\|S_\varepsilon x\| = \tau^{-1}\|x\| \le \tau^{-1}\|x\|_\tau.
		\]
	Therefore, $\|S_\varepsilon\|_{(X,\|\cdot\|_\tau)} \le \tau^{-1}$, showing that the canonical basis is \(\tau^{-1}\)-unconditional for \(\|\cdot\|_\tau\).
	
	\medskip
	
	We now prove that \(C_\tau\) is not SCD.
	By \cite[Proposition~3.9]{LooPerreau2026}, there exists a point $x\in S_X^+$ which is not a SCD point of $B_X^+$. 
	Assume, by means of contradiction, that $C_\tau$ were SCD. 
	Then, every point of $C_\tau$ would be a SCD point of $C_\tau$. 
	In particular, $x$ would be a SCD point of $C_\tau$.
	Applying Proposition~\ref{prop:analogue-3.10}, it would follow that $x$ is a SCD point of $B_X^+$, contradicting the choice of $x$. 
	Hence, $C_\tau$ is not SCD.
\end{proof}

As an immediate consequence, we obtain the following positive answer
to \cite[Question~5.4]{LooPerreau2026}.

\begin{theorem}
	\label{thm:k-unconditional-non-scd}
	For every \(k>1\), there exists a Banach space with a
	\(k\)-unconditional basis whose unit ball is not SCD.
\end{theorem}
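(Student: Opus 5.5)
The plan is to deduce Theorem~\ref{thm:k-unconditional-non-scd} directly from Proposition~\ref{prop:tau-unconditional-non-scd} by a choice of parameter. Given $k>1$, set $\tau:=k^{-1}\in(0,1)$. We then consider the Banach space $(X_T,\|\cdot\|_\tau)$, where $X_T$ is the binary tree space and $\|\cdot\|_\tau$ is the Minkowski functional of $C_\tau$. By \eqref{eq:norm-equivalence-tau}, $\|\cdot\|_\tau$ is an equivalent norm on $X_T$. Hence $(X_T,\|\cdot\|_\tau)$ is a Banach space, and the canonical system $(e_t)_{t\in T}$, enumerated as a sequence compatible with the tree order, remains a Schauder basis: equivalent norms preserve Schauder bases, and the original enumeration is a $1$-unconditional basis of $X_T$.

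The first step is the unconditionality constant. Proposition~\ref{prop:tau-unconditional-non-scd} gives that every sign-change operator $S_\varepsilon$ has norm at most $\tau^{-1}=k$ on $(X_T,\|\cdot\|_\tau)$. We must match this with the paper's definition of $k$-unconditionality, which requires $\|\sum a_je_j\|_\tau\le k\|\sum b_je_j\|_\tau$ whenever $|a_j|\le|b_j|$. This follows from the chain
\[
\Big\|\sum a_je_j\Big\|_\tau\le\tau^{-1}\Big\|\sum a_je_j\Big\|\le\tau^{-1}\Big\|\sum b_je_j\Big\|\le\tau^{-1}\Big\|\sum b_je_j\Big\|_\tau,
\]
where the outer inequalities come from \eqref{eq:norm-equivalence-tau} and the middle one from the $1$-unconditionality of the original norm in the domination form, which holds for the lattice norm of $X_T$.

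The second step is the failure of the SCD property. The closed unit ball of $\|\cdot\|_\tau$ is exactly $C_\tau$, and Proposition~\ref{prop:tau-unconditional-non-scd} states that $C_\tau$ is not SCD. Note that SCD is defined via slices, i.e.\ via the dual and the linear structure, so it does not depend on which equivalent norm is used; no issue arises from the renorming.

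No real obstacle is expected. The only point needing care is that the domination form of $k$-unconditionality, and not just the sign-change form, holds with the same constant $\tau^{-1}$; this is handled by the displayed chain of inequalities.
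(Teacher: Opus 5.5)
Your proposal is correct and matches the paper's proof, which likewise sets $\tau:=1/k$ and applies Proposition~\ref{prop:tau-unconditional-non-scd} to the space $(X_T,\|\cdot\|_\tau)$. The paper leaves implicit that the bound on sign-change operators gives the domination form of $k$-unconditionality in its definition; your chain of inequalities through \eqref{eq:norm-equivalence-tau} and the lattice property of the original norm fills in that step.
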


\begin{proof}
	Fix \(k>1\), and choose $\tau := 1/k$.
	By Proposition~\ref{prop:tau-unconditional-non-scd}, the space $(X,\|\cdot\|_\tau)$ has a \(k\)-unconditional basis and its unit ball $C_\tau$ is not SCD.
\end{proof}

%


\end{document}